\def\l@subsection{\@tocline{2}{0pt}{2.5pc}{5pc}{}}
\numberwithin{equation}{section}
\numberwithin{figure}{section}
\theoremstyle{plain}
\newtheorem{thm}{Theorem}[subsection]
\newtheorem{prop}[thm]{Proposition}
\newtheorem{lem}[thm]{Lemma}
\theoremstyle{definition}
\newtheorem{defn}[thm]{Definition}
\theoremstyle{remark}
\newtheorem{example}[thm]{Example}
\global\long\def\ma#1{\mathrm{#1}}
\global\long\def\inner#1{\langle#1\rangle}
\global\long\def\l{\lambda}
\global\long\def\R{\mathbb{R}}
\global\long\def\Z{\mathbb{Z}}
\global\long\def\N{\mathbb{N}}
\global\long\def\cO{\mathcal{O}}
\global\long\def\cL{\mathcal{L}}
\global\long\def\t{\mathfrak{t}}
\global\long\def\ra{\rightarrow}
\global\long\def\id{\ma{id}}
\global\long\def\xymt{\ar@{|->}}
\global\long\def\xyinj{\ar@{^{(}->}}
\global\long\def\xydash{\ar@{-->}}
\global\long\def\ket#1{|#1\rangle}
\global\long\def\p#1{\frac{\partial}{\partial#1}}
\global\long\def\ber{\mathrm{Ber}}
\global\long\def\sgn{\mathrm{sgn}}
\global\long\def\pff{\mathrm{pfaff}}
\global\long\def\abs#1{\lvert#1\rvert}
\global\long\def\red{\mathrm{red}}
\global\long\def\norm#1{\lVert#1\rVert}
\global\long\def\hess{\mathrm{Hess}}
\global\long\def\ori{\mathrm{or}}
\begin{document}

\title{Localization and Stationary Phase Approximation on Supermanifolds}
\begin{abstract}
Given an odd vector field $Q$ on a supermanifold $M$ and a $Q$-invariant
density $\mu$ on $M$, under certain compactness conditions on $Q$,
the value of the integral $\int_{M}\mu$ is determined by the value
of $\mu$ on any neighborhood of the vanishing locus $N$ of $Q$.
We present a formula for the integral in the case where $N$ is a
subsupermanifold which is appropriately non-degenerate with respect
to $Q$. 

In the process, we discuss the linear algebra necessary to express
our result in a coordinate independent way. We also extend stationary
phase approximation and the Morse-Bott Lemma to supermanifolds.
\end{abstract}

\author{Valentin Zakharevich}

\maketitle
\tableofcontents{}

\section{Introduction}

The localization phenomenon in the form discussed in this paper, although
not in the language of super geometry, first appeared in the work
of Duistermaat and Heckman \cite{Duistermaat_Heckman}. They compute
the volume of a symplectic manifold with a Hamiltonian circle action
in terms of data at the fixed points of the action. The ideas were
extended to localization in equivariant cohomology by Berline and
Vergne \cite{Berline_Vergne} and independently by Atiyah and Bott
\cite{Atiyah_Bott_moment_map}. 

The study of supermanifolds originated in physics in order to understand
supersymmetry and in particular to make supersymmetry manifest on
the level of classical physics. The path integral quantization in
this case involves an integral over an infinite-dimensional supermanifold.
Witten \cite{Witten_Supersymmetry_and_Morse_Theory} was the first
to apply the localization techniques to supersymmetric theories in
order to reduce infinite dimensional path integrals to finite dimensional
ones. Localization turned out to be a very powerful tool for studying
non-perturbative aspects of supersymmetric field theories. For a survey
of recent applications of localization techniques to supersymmetric
quantum field theories see \cite{Pestun_Localization_techniques}.

In order to isolate the essential features of localization, Schwarz
and Zaboronsky \cite{SuppersymmetryAndLocalization} analyzed  the
case of general finite dimensional supermanifolds. They show that
given an odd vector field $Q$ on a supermanifold $M$ such that $[Q,Q]$
comes from an action of a compact torus and a compactly supported
$Q$-invariant density $\mu$, the integral $\int_{M}\mu$ depends
only on the value of $\mu$ on any open neighborhood of the vanishing
space $N$ of $Q$.%
\footnote{It will be made precise what we mean by the vanishing space in \subref{General-Localization-Statement}.%
} Moreover, in the case where $N$ is an appropriately non-degenerate
discrete subsupermanifold with respect to $Q$, Schwarz and Zaboronsky
derive a formula for the integral $\int_{M}\mu$ in terms of the restriction
to $N$ of $\mu$ and the action of the Lie derivative $\cL_{Q}$
on the normal bundle of $N$. In this paper we extend their result
to the case where $N$ is a non-degenerate subsupermanifold which
need not be discrete (\thmref{localization_formula}). 

We summarize the arguments of Schwarz and Zaboronsky from \cite{SuppersymmetryAndLocalization}.
Under the assumptions that $Q^{2}$ comes from an action of a compact
torus, the authors construct an odd function $\sigma$ having the
properties that $Q^{2}\sigma=0$ and $Q\sigma$ is invertible on the
complement of $N$. Then, if the support of $\mu$ is disjoint from
$N$ we have 
\[
\int_{M}\mu=\int_{M}\cL_{Q}(\frac{\sigma}{Q\sigma}\mu)=0
\]
where the last equality follows from the invariance of integration
under diffeomorphisms. More generally, as we will review in \secref{Localization-Theorem},
given an open neighborhood $U\supset N$ there is a $Q$-invariant
even function $g$ which equals 1 in a neighborhood of $N$ and vanishes
outside of $U$. By the previous argument, for any such $g$ we have
\[
\int_{M}\mu=\int_{M}g\mu+\int_{M}(1-g)\mu=\int_{M}g\mu.
\]
To compute the actual value of the integral, the authors consider
the function
\[
Z(\lambda):=\int_{M}\mu e^{i\lambda Q\sigma}
\]
for $\lambda\in\mathbb{R}_{\geq0}$. Since 
\[
\frac{d}{d\lambda}Z(\lambda)=i\int_{M}\mu Q\sigma e^{i\lambda Q\sigma}=i\int_{M}\mathcal{L}_{Q}(\mu\sigma e^{i\lambda Q\sigma})=0,
\]
the function $Z(\lambda)$ is constant. The stationary phase approximation
computes the asymptotic behavior of $Z(\lambda)$ as $\lambda\ra\infty$
in terms of the local data of $\mu$ and $Q\sigma$ on the critical
subsupermanifold of $Q\sigma$ in case the critical subsupermanifold
is non-degenerate. Since $Z(\lambda)$ does not depend on $\lambda$,
the limit ${\displaystyle \lim_{\lambda\ra\infty}Z(\lambda)}$ equals
the desired integral ${\displaystyle \int_{M}\mu}=Z(0)$. 

In this paper, we first work out an expression for stationary phase
approximation on supermanifolds, i.e. assuming $Q\sigma$ has non-degenerate
critical subsupermanifold, we compute the asymptotic behavior of $Z(\lambda)$
in terms of the Hessian of $Q\sigma$ and restriction of $\mu$ to
the critical subsupermanifold (\thmref{super_stationary_phase_approx}).
To do this, we prove a generalization of the Morse-Bott Lemma to supermanifolds
(\thmref{super_morse_bott1}). We then show that if the vanishing
space of $Q$ is a non-degenerate%
\footnote{The vanishing locus $N$ of a vector field $Q$ is non-degenerate
if it is a subsupermanifold and the restriction of the Lie derivative
$\cL_{Q}$ to the normal bundle $\nu_{N}$ is an automorphism. %
} subsupermanifold $N$, then $N$ is also the non-degenerate critical
subsupermanifold of $Q\sigma$ and we compute the Hessian of $Q\sigma$
in terms of the Lie derivative $\cL_{Q}$. In particular, we end up
with a formula (\thmref{localization_formula}) for $\int_{M}\mu$
in terms of $Q$ and $\mu$ not depending on the auxiliary function
$\sigma$. This has been done in \cite{SuppersymmetryAndLocalization}
in the case where $N$ consists of isolated points. 

In \secref{Super-Linear-Algebra} and \secref{Calculus-on-Supermanifolds},
we will briefly review constructions of super linear algebra and supermanifolds
that will be necessary in later chapters. For a more elaborate introduction,
see \cite{NotesOnSuperSymmetry,Leites1980,GaugeFieldTheory}. In \secref{Stationary-Phase-Approximation}
we will discuss the Morse-Bott Lemma and stationary phase approximation
in the setting of supermanifolds. In \secref{Localization-Theorem},
we state and prove the localization theorem.

\section{Super Linear Algebra\label{sec:Super-Linear-Algebra}}

\subsection{Super Vector Spaces and Super Algebras}

A super vector space is a vector space $V$ together with a decomposition
\[
V=\sum_{i\in\mathbb{Z}/2\mathbb{Z}}V_{i}=V_{0}\oplus V_{1}.
\]
We call elements in $V_{i}$ homogeneous and denote the parity of
homogeneous elements by $p$, i.e., $p(v)=i$ if $v\in V_{i}$. 

The space of linear maps between super vector spaces is naturally
a super vector space, i.e., 
\[
\mbox{Hom}(V,W)_{i}=\{f\in\mbox{Hom}(V,W)|\forall v\in V_{0}\cup V_{1},\ p(f(v))=(-1)^{i}p(v)\}
\]
where in the above expression it is implicit that homogeneous elements
of $\mbox{Hom}(V,W)$ map homogeneous elements of $V$ to homogeneous
elements of $W$. We will often assume that elements in our expressions
are homogeneous in which case the general formula is implied by linearity. 

The direct sum of super vector spaces is defined by 
\[
\left(V\oplus W\right)_{i}=V_{i}\oplus W_{i}.
\]
The tensor product is given by 
\[
\left(V\otimes W\right)_{i}=\bigoplus_{k+l=i}V_{k}\otimes W_{l}.
\]

Of great importance is the isomorphism between $V\otimes W$ and $W\otimes V$
that makes the category of super vector spaces into a symmetric monoidal
category. We define this isomorphism by
\[
v\otimes w\mapsto(-1)^{p(v)p(w)}w\otimes v
\]
where $v\in V,w\in W$ are homogeneous elements. The introduction
of the minus sign when two odd elements are permuted is called the
Koszul sign rule.

A super algebra is a super vector space $A$ together with an even
homomorphism 
\[
A\otimes A\rightarrow A.
\]
We call a super algebra commutative if for $a,b\in A$ homogeneous,
we have 
\[
ab=(-1)^{p(a)p(b)}ba.
\]
From here onwards, by super algebra we will mean an associative super
algebra with a unit.
\begin{example}
Let $A=\mathbb{R}[\theta^{1},\dots,\theta^{n}]$ be the free commutative
super algebra over $\mathbb{R}$ generated by odd elements $\theta^{1},\dots,\theta^{n}$.
A general element of $A$ has the form 
\[
f=\sum_{I\subset\{1,\dots,n\}}a_{I}\theta^{I}
\]
where $\theta_{I}:=\prod_{i\in I}\theta^{i}$ with the product taken
with increasing order of index and $a_{I}\in\mathbb{R}$. Note that
$(\theta^{i})^{2}=0$ for all $i$ and $\theta^{i}\theta^{j}=-\theta^{j}\theta^{i}$. 
\end{example}

\subsection{Modules}

Let $A$ be a super algebra. A left $A$-module $M$ is a super vector
space with the action given by an even morphism of super vector spaces
\[
A\otimes M\rightarrow M
\]
satisfying 
\begin{eqnarray*}
1.m & = & m\hspace{1em}\forall m\in M\\
a.(b.m) & = & (ab).m\hspace{1em}\forall a,b\in A,\hspace{1em}\forall m\in M.
\end{eqnarray*}

Right $A$-modules are defined analogously. If $A$ is commutative
then a left module structure on $M$ defines a right module structure
via 
\[
m.a=(-1)^{p(a)p(m)}a.m
\]

We define the parity reversal functor $\Pi$ on left $A$-modules
in the following way: Let $\tilde{\Pi}$ be a free left $A$-module
generated by an odd element $\pi$. We give it a right action of $A$
by 
\[
(a.\pi).b=(-1)^{p(b)}ab.\pi
\]
For any left $A$-module $M$ we define 
\[
\Pi M:=\tilde{\Pi}\otimes_{A}M.
\]
As an ungraded vector space, $\Pi M$ is isomorphic to $M$ but the
parity of homogeneous elements is flipped. We use the notation 
\[
\Pi m:=\pi\otimes m
\]
where $m\in M$. The $A$ action on $\Pi M$ is 
\[
a.(\Pi m)=(-1)^{p(a)}\Pi(a.m).
\]

A homomorphism between two right $A$-modules $N,M$ is a map 
\[
f:M\rightarrow N
\]
such that 
\[
f(m.a)=f(m).a
\]
A homomorphism $f$ between two left $A$-modules $M,N$ is such that
\[
f(am)=(-1)^{p(a)p(f)}af(m).
\]
One can check that when $A$ is commutative, the notions of morphisms
between $M$ and $N$ as left and right $A$-modules coincide. The
vector space $\mbox{Hom}_{A}(M,N)$ is naturally graded by whether
a homomorphism preserves the grading or reverses it. 

Assume from now on that $A$ is a commutative super algebra. A free
module $M$ over $A$ has dimension $p|q$ if it is freely generated
by $p$ even elements and $q$ odd elements. Let $M$ be a free module
and consider a basis $\{e_{1},\dots,e_{p},e_{p+1},\dots,e_{p+q}\}$
of $M$ where $e_{1},\dots,e_{p}$ are even and $e_{p+1},\dots,e_{p+q}$
are odd. An element $m\in M$ can be given by right coordinates $\{m_{i}\}$:
\[
m=\sum_{i}e_{i}m_{i}.
\]

We will briefly develop the conventions we will use when describing
operations on modules via matrices in a fixed bases. We will denote
column vectors by $\ket{\cdot}$, i.e. for $m\in M$, $\ket m$ is
the column vector of right coordinates of $m$. Let $M,N$ be free
modules with bases $\beta=\{e_{i}\}$ and $\gamma=\{f_{j}\}$ respectively.
Given a morphism $F:M\rightarrow N$, define the matrix $F_{\beta}^{\gamma}$
by 
\[
F(e_{i})=\sum_{j}f_{j}\cdot[F_{\beta}^{\gamma}]{}_{ij}.
\]
We will omit explicit reference to the bases when there is no ambiguity.
It is straightforward to check that 
\[
\ket{Fm}=F\ket m.
\]
The matrix of $F$ is naturally written in block form. If $M$ and
$N$ have dimensions $p|q$ and $p'|q'$ respectively then 
\begin{eqnarray}
F & = & \left(\begin{matrix}A & B\\
C & D
\end{matrix}\right)\label{eq:BlockMatrix}
\end{eqnarray}
where $A\in\mbox{Mat}(p'\times p)$, $B\in\mbox{Mat}(q'\times p)$,
$C\in\mbox{Mat}(p'\times q)$ and $D\in\mbox{Mat}(q'\times q)$. If
$F$ is an even homomorphism, then entries of $A$ and $D$ are even
while entries of $B$ and $C$ are odd. 

If $\beta=\{e_{i}\}$ and $\beta'=\{e_{i}'\}$ are two bases, the
coordinate change matrix from basis $\beta$ to basis $\beta'$ is
the matrix $G_{\beta}^{\beta'}$ such that 
\[
e_{i}=\sum_{j}e'_{j}\cdot[G_{\beta}^{\beta'}]{}_{ij}.
\]
If $\ket m$ is the vector representation of $m\in M$ in the basis
$\beta$, then $G_{\beta}^{\beta'}\ket m$ is the vector representation
of $m$ in the basis $\beta'$. 

For $M$ an $A$-module, the dual module $M^{*}$ is defined by 
\[
M^{*}:=\mbox{Hom}_{A}(M,A).
\]
A basis $\beta=\{e_{i}\}$ of $M$ defines a dual basis $\beta^{*}=\{e_{i}^{*}\}$
of $M^{*}$ by 
\[
e_{i}^{*}(e_{j})=\delta_{i}^{j}.
\]

A morphism $F:M\rightarrow N$ induces a morphism 
\[
F^{*}:N^{*}\rightarrow M^{*}
\]
by the formula 
\[
F^{*}(n^{*})(m):=(-1)^{p(n^{*})p(F)}n^{*}\left(F(m)\right)
\]
for all $n^{*}\in N^{*},m\in M$ homogeneous. To state what the matrix
representation of the dual homomorphism is, we need to introduce the
super analog of the transpose of a matrix. 

For a matrix 
\[
F=\left(\begin{matrix}A & B\\
C & D
\end{matrix}\right)
\]
corresponding to a homogeneous morphism as in \eqref{BlockMatrix}
we define the supertranspose by 
\[
F{}^{st}=\begin{cases}
\left(\begin{matrix}A^{t} & C^{t}\\
-B^{t} & D^{t}
\end{matrix}\right) & p(F)=0\\
\left(\begin{matrix}A^{t} & -C^{t}\\
B^{t} & D^{t}
\end{matrix}\right) & p(F)=1.
\end{cases}
\]
To be precise, the supertranspose is defined on a matrix consisting
of elements of $A$ only after we know the dimensions of the free
modules it is meant to act on. For free modules $M,N$ with fixed
bases and a homomorphism $F:M\rightarrow N$, the matrix of the dual
homomorphism $F^{*}$ in the dual bases is given by the supertranspose,
i.e.,
\[
(F^{*})_{\beta'^{*}}^{\beta^{*}}=\left(F_{\beta}^{\beta'}\right){}^{st}.
\]

\subsection{The Berezinian\label{sub:The-Berezinian}}

The Berezinian is the super analog of the determinant. It is defined
on even automorphisms of free $A$-modules. Let $F$ be an even automorphism
of a free module $M$. Given a basis of $M$, the matrix representation
of $F$ can be written in block form as 
\[
F=\left(\begin{matrix}A & B\\
C & D
\end{matrix}\right).
\]
The Berezinian of $F$ is defined by 
\[
\ber(F)=\det(A-BD^{-1}C)\det(D)^{-1}.
\]
The expression for the Berezinian does not depend on the chosen basis
\cite[§ 1.10]{NotesOnSuperSymmetry}.

Classically, one can define determinant of an automorphism by its
action on the determinant line which is the top exterior power. In
the same spirit, we can define the Berezinian line of a module by
prescribing how an automorphism of the module acts on this line. Let
$M$ be a free $A$-module of dimension $p|q$. The Berezinian line
of $M$, denoted by $\ber M$ has dimension $1|0$ if $q$ is even
and $0|1$ if $q$ is odd. A basis $\beta=\{e_{1},\dots,e_{p+q}\}$
of $M$ defines a basis element $b_{\beta}$ of $\ber M$ and for
an even automorphism $G$ of $M$, the elements $b_{\beta}$ and $b_{G\beta}$
satisfy the relation 
\[
b_{G\beta}=\ber(G)b_{\beta}.
\]
An even isomorphism $F:N\rightarrow M$ induces a map $\ber F:\ber N\rightarrow\ber M$
by 
\begin{eqnarray*}
b_{\beta} & \mapsto & b_{F\beta}.
\end{eqnarray*}

One can check that for an even automorphism $F:M\rightarrow M$, 
\begin{eqnarray*}
\ber(F) & = & \ber(F^{*}).
\end{eqnarray*}
Also, if we denote by $\Pi F:\Pi M\rightarrow\Pi M$ the automorphism
of $\Pi M$ induced by $F$, then 
\[
\ber(\Pi F)=\ber(F)^{-1}.
\]
It follows that there are natural isomorphisms between $\ber(M)^{*}$,
$\ber(M^{*})$ and $\ber(\Pi M)$. For a basis $\beta$, these isomorphisms
identify $\left(b_{\beta}\right)^{*}$, $b_{\beta^{*}}$ and $b_{\Pi\beta}$
where $\Pi\beta=\{\Pi e_{p+1},\dots,\Pi e_{p+q},\Pi e_{1},\dots,\Pi e_{p}\}$
if $\beta=\{e_{1},\dots,e_{p},e_{p+1},\dots,e_{p+q}\}$.

\subsection{Orientation}

Let $A$ be a commutative super algebra over $\R$.
\begin{defn}
A sign homomorphism is a group homomorphism 
\[
\sgn:A^{\times}\rightarrow\{\pm1\}
\]
extending the usual sign on $\R^{\times}$ where $A^{\times}$ denotes
the invertible elements of $A$. 
\end{defn}
We refer to elements in $\sgn^{-1}(+1)$ as positive and those in
$\sgn^{-1}(-1)$ as negative. Let $M$ be a free $p|q$ module over
$A$. There are multiple, non-equivalent notions of orientation on
$M$ coming from even and odd parts of $M$.
\begin{defn}
For $D=\left(\begin{matrix}D_{11} & D_{12}\\
D_{21} & D_{22}
\end{matrix}\right)\in GL(p|q,A)$ , we define the $(i,j)$ orientation of $D$ by 
\[
\ori{}_{(i,j)}(D)=\sgn(\det(D_{11})^{i}\det(D_{22})^{j})
\]
where $i,j\in\{0,1\}$.
\end{defn}
It is straight forward to show that $\ori{}_{(i,j)}$ is well defined
on $GL(M)$ i.e., does not depend on the chosen basis.
\begin{defn}
For $i,j\in\{0,1\}$, we define $\ori_{(i,j)}(M)$ to be the set of
equivalence classes of bases of $M$ where two bases $\beta,\beta'$
are equivalent if $\ori_{(i,j)}$ of the matrix corresponding to the
change of basis from $\beta$ to $\beta'$ is $1$. 
\end{defn}

\subsection{Bilinear Forms\label{sub:Bilinear-Forms}}

Let $A$ be a commutative super algebra over $\R$ with a sign homomorphism
and $M$ a free $A$-module. A bilinear form on $M$ is a morphism
$B:M\otimes M\rightarrow A$. A bilinear form $B$ induces a map 
\[
\hat{B}:M\rightarrow M^{*}
\]
by 
\[
\hat{B}(m_{1})m_{2}=B(m_{1},m_{2}).
\]
If $B$ is a non-degenerate even bilinear form, then $\hat{B}$ is
an even isomorphism and thus induces a map
\[
\ber(\hat{B}):\ber(M)\xrightarrow{\sim}\ber(M^{*})\cong\ber(M)^{*}.
\]

\begin{defn}
For $B$ a non-degenerate even bilinear form, we define $\ber(B)\in\ber(M^{*})^{\otimes2}$
to be the image of $1$ under the map 
\[
A\cong\ber(M)\otimes\ber(M)^{*}\xrightarrow{(\ber(\hat{B})\otimes\id)}\ber(M^{*})^{\otimes2}.
\]
We also define the element $\ber^{-1}(B)\in\ber(M)^{\otimes2}$, which
is the unique element satisfying
\[
\ber(B)\left(\ber^{-1}(B)\right)=1
\]
where we used the fact that for any module $N$, 
\[
N^{*}\otimes N^{*}\cong\left(N\otimes N\right)^{*}
\]
where the isomorphism is given by 
\[
(n_{1}^{*}\otimes n_{2}^{*})(n_{1}\otimes n_{2})=\left(-1\right)^{p(n_{1})p(n_{2}^{*})}n_{1}^{*}(n_{1})n_{2}^{*}(n_{2})
\]
for $n_{i}\in N$ and $n_{i}^{*}\in N^{*}$.
\end{defn}
In the case where both $A$ and $M$ are purely even, we will denote
these elements by $\det(B)$ and $\det^{-1}(B)$ respectively. 
\begin{defn}
For $B$ a non-degenerate even symmetric form, we define an element
$\ori_{(0,1)}(B)\in\ori_{(0,1)}(M)$ by the following: a basis $\mathcal{\beta}=\{e_{i}\}\in\ori_{(0,1)}(B)$
if the matrix $B_{\beta}=\left(\begin{matrix}B_{11} & B_{12}\\
B_{21} & B_{22}
\end{matrix}\right)$ satisfies 
\[
\sgn(\pff(B_{22}))=1
\]
where $\left(B_{\beta}\right)_{ij}=B(e_{i},e_{j})$ and $ $$\pff$
denotes the Pfaffian of a skew-symmetric matrix.
\end{defn}
The main idea of this construction is that a symmetric bilinear form
is skew-symmetric on the odd part of the module in the ungraded sense,
and there defines an orientation the same way a symplectic form does
classically.

\subsection{Berezinian of an Odd Isomorphism\label{sub:Berezinian-of-an-odd-isomorphism}}

Let $A$ be a commutative super algebra and $M$ be a free $p|q$
module over $A$. Let $E$ be an odd automorphism of $M$. We will
construct an element of $\ber(M)^{\otimes2}$ from this automorphism.
We have that $E$ defines an isomorphism 
\[
\hat{E}:\Pi M\rightarrow M
\]
and therefore also 
\[
\ber(\hat{E}):\ber(\Pi M)\rightarrow\ber(M).
\]
Via the identification of $\ber(\Pi M)$ and $\ber(M^{*})$ the above
map becomes 
\[
\ber(\hat{E}):\ber(M^{*})\rightarrow\ber(M).
\]

\begin{defn}
We denote by $\ber(E)$ the image of 1 in $\ber(M)^{\otimes2}$ of
the map 
\[
A\cong\ber(M^{*})\otimes\ber(M)\xrightarrow{\ber(\hat{E})\otimes\id}\ber(M)^{\otimes2}.
\]

\end{defn}
If we were to perform an analogous construction for an even automorphism,
we would get an element of $\ber(M^{*})\otimes\ber(M)$ which is naturally
isomorphic to $A$.

In coordinates, $\ber(E)$ is given by the following expression: for
$\beta$ a basis of $M$, $b_{\mathcal{\beta}}\otimes b_{\mathcal{\beta}}$
is a basis element of $\ber(M)^{\otimes2}$ and we have 
\[
\ber(E)=\ber(E_{\beta}^{\beta}I')b_{\beta}\otimes b_{\beta}
\]
where 
\[
I'=\left(\begin{matrix}0 & \mbox{id}\\
\mbox{id} & 0
\end{matrix}\right)
\]
and $E_{\beta}^{\beta}$ is the matrix representation of $E$ in the
basis $\beta$.

\subsection{Square Root of One-Dimensional Modules\label{sub:Square-Root-of-1-d-modules}}

Let $A$ be a commutative super algebra over $\R$ with a sign homomorphism.
Suppose we are given an automorphism of the positive elements of $A$
\[
\sqrt{\mbox{ }}:A_{>0}^{\times}\rightarrow A_{>0}^{\times}
\]
such that $\left(\sqrt{a}\right)^{2}=a$ for all $a\in A_{>0}^{\times}.$
For example, we have such map for the algebra of functions on a supermanifold.%
\footnote{See the footnote on page \pageref{fn:Square_Root_on_Supermanifolds}.%
}
\begin{defn}
For any (even or odd) one-dimensional module $M$, we define the map
\[
\sqrt{\mbox{ }}:\left(M^{\otimes2}\right)_{\times}\rightarrow M\otimes\ori_{(1,1)}(M)
\]
where $M_{\times}$ denotes the set of elements of $M$ that constitute
a basis. If $v$ is a basis element of $M$, then $v\otimes v$ is
a basis element of $M^{\otimes2}$ and we define 
\[
\sqrt{fv\otimes v}:=\sqrt{\abs f}v\otimes\ori_{(1,1)}(v)
\]
where $f\in A_{0}^{\times}$ and $|f|=\sgn(f)\cdot f$ is positive.
\end{defn}

\section{Calculus on Supermanifolds\label{sec:Calculus-on-Supermanifolds}}

\subsection{Supermanifolds}

Just like many other geometric objects (manifolds, schemes, analytic
spaces), a supermanifold is a locally ringed space with a particular
local model. The local model in this case is the topological space
$\mathbb{R}^{m}$ with the structure sheaf $\mathcal{C}^{\infty}(\mathbb{R}^{m})[\theta^{1},\dots,\theta^{n}]$.
\begin{defn}
A supermanifold of dimension $m|n$ is a pair $M=(\overline{M},\mathcal{O}_{M})$
where $\overline{M}$ is a topological space and $\mathcal{O}_{M}$
is a sheaf of super algebras such that every point $m\in\overline{M}$
has a neighborhood $U\subset\overline{M}$ such that $(U,\mathcal{O}_{M}|_{U})\cong(\mathbb{R}^{m},\mathcal{C}^{\infty}(\mathbb{R}^{m})[\theta^{1},\dots,\theta^{n}])=:\mathbb{R}^{m|n}$.\end{defn}
\begin{example}
Let $\overline{M}$ be an ordinary $m$-manifold and $E\rightarrow\overline{M}$
a rank $n$ vector bundle. Then the pair $(M,\Gamma(\bigwedge^{*}E))$
is a supermanifold of dimension $m|n$ which we denote by $\Pi E^{*}$. 
\end{example}
A morphism of supermanifolds $f:M\rightarrow N$ is a continuous map
\[
|f|:\overline{M}\rightarrow\overline{N}
\]
and an even morphism of sheaves of rings over $\overline{M}$ 
\[
f^{*}:|f|^{-1}\mathcal{O}_{N}\rightarrow\mathcal{O}_{M}
\]
which on the level of stalks is a morphism of local rings. 

A supermanifold has an underlying ordinary manifold. Let $J\subset\mathcal{O}_{M}$
be the ideal generated by odd elements. The locally ringed space $(\overline{M},\mathcal{O}_{M}/J)$
is an ordinary smooth manifold since locally, the quotient $\mathcal{O}_{M}/J$
is isomorphic to $\mathcal{C}^{\infty}(\mathbb{R}^{m})$. This smooth
manifold is called the reduced manifold of $M$ and denoted $M_{\red}$.
We have an embedding 
\[
M_{\red}\hookrightarrow M
\]
in the category of supermanifolds. From now on we'll abandon the notation
$\overline{M}$ for the underlying topological space and denote it
by $M_{\red}$, i.e., $M=(M_{\red},\mathcal{O}_{M})$. 

A supermanifold can be given by charts and gluing data. The gluing
isomorphisms are maps of locally ringed spaces
\[
U\rightarrow V
\]
where $U,V\subset\mathbb{R}^{m|n}$. For ordinary manifolds, a map
of locally ringed spaces as above is specified by the pullbacks of
the coordinate functions of the image manifold. The same holds for
supermanifolds: a morphism 
\[
\phi:U\rightarrow V
\]
where $U\subset\mathbb{R}^{m|n},V\subset\mathbb{R}^{m'|n'}$ is uniquely
specified by $\phi^{*}(y^{i})$ and $\phi^{*}(\xi^{j})$ where $y^{i},\xi^{j}$
are the coordinate functions on $V$. Conversely, any collection of
$m'$ even functions $\psi_{i}$ and $n'$ odd functions $\eta_{j}$
on $U$ such that $(\psi_{1},\dots,\psi_{m'})\in V_{\red}\subset\mathbb{R}^{m'}$
when evaluated on $U_{\red}$ give rise to a morphism $\phi:U\rightarrow V$
such that $\phi^{*}(y^{i})=\psi_{i}$ and $\phi^{*}(\xi^{j})=\eta_{j}$.
A notation for this morphism resembling that of an ordinary manifold
is 
\[
\phi(x^{1},\dots,x^{m},\theta^{1},\dots,\theta^{n})=(\psi_{1},\dots,\psi_{m'},\eta_{1},\dots,\eta_{n'})
\]
where $x^{i},\theta^{j}$ are the coordinate functions on $U$. 

A vector bundle $E$ of rank $p|q$ over a supermanifold $M$ is a
sheaf of $\mathcal{O}_{M}$ modules over $M_{\red}$ which is locally
free of rank $p|q$. Given a morphism of supermanifolds $f:M\rightarrow N$,
the pullback vector bundle is defined by 
\[
f^{*}E=f^{-1}E\otimes_{f^{-1}\mathcal{O}_{N}}\mathcal{O}_{M}.
\]

\subsection{Vector Fields}

The tangent bundle $TM$ of a supermanifold is the sheaf of derivations
of the structure sheaf i.e. $\R$-linear maps $D:\cO\rightarrow\cO$
such that 
\[
D(ab)=D(a)b+(-1)^{p(a)p(D)}aD(b).
\]
Sections of the tangent bundle are referred to as vector fields. The
rank of $TM$ for a $(m|n)$-supermanifold $M$ is $(m|n)$ and in
a local chart with coordinates $x^{1},\dots,x^{m},\theta^{1},\dots,\theta^{n}$,
a basis of $TM$ is given by $\p{x^{i}},\p{\theta^{j}}$ where 
\begin{eqnarray*}
\p{x^{i}}(x^{j}) & = & \delta_{j}^{i};\ \ \ \ \ \ \ \ \ \p{x^{i}}(\theta^{j})=0,\\
\p{\theta^{i}}(\theta^{j}) & = & \delta_{j}^{i};\ \ \ \ \ \ \ \ \ \p{\theta^{i}}(x^{j})=0.
\end{eqnarray*}

A lot of geometric concepts having to do with vector fields carry
over from classical geometry once we are able to phrase them in terms
of maps of spaces and sheafs of sections. An important construction
is the flow of a vector field. In its most naive sense it is only
defined for even vector fields: If $V$ is an even vector field on
a supermanifold M, then there exists a flow map 
\[
\phi_{V}:D\rightarrow M
\]
where $D\subset M\times\mathbb{R}^{1|0}$ is an open neighborhood
of $M\times\{0\}$, having the property that for any function $f\in\cO_{M}$,
\[
\phi_{V}^{*}(V(f))=\p t\phi_{V}^{*}(f).
\]

The flow map allows us to define the Lie derivative. We will state
the definition for the Lie derivative of a vector field, but analogous
definitions work for other geometric structures that will be defined
later. Let $V,W\in TM$ be vector fields, and assume that $V$ is
even. Then the Lie derivative of $W$ with respect to $V$ is 
\[
\mathcal{L}_{V}W(f):=\frac{d}{dt}\mbox{\ensuremath{\bigg|}}_{t=0}W(i_{t}^{*}(\phi_{V}^{*}(f))
\]
where $i_{t}:M\rightarrow M\times\mathbb{R}^{1|0}$ is the embedding
$i_{t}(m)=(m,t)$.

To define the Lie derivative with respect to an odd vector field $Q$
on $M$, we consider the even vector field $\eta Q$ on $M\times\mathbb{R}^{0|1}$
where $\eta$ is the odd coordinate on $\R^{0|1}$ and define $\mathcal{L}_{Q}$
by requiring 
\[
\mathcal{L}_{\eta Q}(*)=\eta\mathcal{L}_{Q}(*).
\]

Similarly to the case of ordinary manifolds, the Lie derivative of
vector fields can be expressed as the Lie bracket where in the case
of supermanifolds we have to account for the Koszul sign rule. For
$V,W$ homogeneous vector fields, we define a new vector field $[V,W]$
of parity $p(V)+p(W)$ by 
\[
[V,W](f)=V(W(f))-(-1)^{p(V)p(W)}W(V(f))
\]
where $f$ is any function. We have 
\[
\mathcal{L}_{V}W=[V,W].
\]
Note that for an odd vector field $Q$, we have $[Q,Q]=2Q^{2}$ which
in general need not vanish.

A $(i,j)$-orientation of a supermanifold is a section of the bundle
$\ori_{(i,j)}(TM)$. A supermanifold might be $(i,j)$-orientable
for some pairs $(i,j)$ but not others. For example, a $(1,0)$-orientation
of $M$ is the same as orientation of $M_{red}$ while a $(0,1)$-orientation
of $\Pi E$ where $E\rightarrow N$ is an ordinary vector bundle,
is an orientation of $E$.

\subsection{Integration}

One can define differential forms on supermanifolds as sections of
exterior powers of the cotangent bundle just as for ordinary manifolds,
but if a supermanifold has non-zero odd dimension, then there are
differential forms of arbitrary high degree. In particular, there
is not a top degree form that one can integrate over a supermanifold.
In order to get an integration theory, we need to generalize the top
degree form in a different manner. For an ordinary vector space $E$,
the top degree exterior power is isomorphic to the determinant line.
The generalization of the determinant to supergeometry is the Berezinian. 
\begin{defn}
A section of $\ber(TM^{*})$ is called an integral form. A section
of $\ber(TM^{*})\otimes\ori_{(1,0)}(TM)$ is called a density.
\end{defn}
Suppose first that $\mu$ is a compactly supported integral form on
$\R^{m|n}$. Let $x^{1},\dots,x^{m},\theta^{1},\dots,\theta^{n}$
be the coordinates on $\R^{m|n}$. A basis $\beta$ of $TM^{*}$ is
given by $\beta=\{dx^{1},\dots,d\theta^{n}\}$ which is dual to the
basis $\{\p{x^{1}},\dots,\p{\theta^{n}}\}$ of $TM$. The basis of
$\ber(TM^{*})$ that we denoted $b_{\beta}$ in  \subref{The-Berezinian}
is denoted by $[dx^{1},\dots,d\theta^{n}]$ in this context. We can
thus write the integral form $\mu$ as 
\[
\mu=[dx^{1},\dots,d\theta^{n}]\left(\sum_{I\subset\{1,\dots,n\}}f_{I}\theta^{I}\right)
\]
where $\theta^{I}=\prod_{i\in I}\theta^{i}$ and $f_{I}\in\mathcal{C}_{c}^{\infty}(\mathbb{R}^{m})$.
We define the integral of $\mu$ over $\R^{m|n}$ to be
\[
\int_{\R^{m|n}}\mu=\int_{\R^{m|n}}[dx^{1},\dots,d\theta^{n}]\left(\sum_{I\subset\{1,\dots,n\}}f_{I}\theta^{I}\right)=\int_{\R^{m}}dx^{1}\dots dx^{m}f_{\{1,2,\dots,n\}}
\]
where the right side is an ordinary integral. We refer to \cite{GaugeFieldTheory}
for a careful proof that this integral does not depend on the chosen
coordinates $x^{1},\dots,\theta^{n}$, but only on the induced orientation
on $\mathbb{R}^{m}$ or in other words on $\ori_{(1,0)}(\R^{m|n})$.

Now suppose $M$ is a $(1,0)$-oriented supermanifold and $\mu$ a
compactly supported form on $M$. To calculate the integral of $\mu$,
we find a locally finite cover of $M$ by oriented affine charts $\{U_{i}\}$
and a partition of unity $\{\phi_{i}\}$ subordinate to $\{U_{i}\}$.
The integral is defined by 
\[
\int_{M}\mu=\sum_{i}\int_{U_{i}}\phi_{i}\mu
\]
where the sum on the right is finite since $\mu$ is compactly supported
and $\{U_{i}\}$ is locally finite. The arguments for why the partition
of unity exists and why this definition does not depend on the choices
made is analogous to the classical case. 

Instead of assuming that $M$ is $(1,0)$-oriented, we can instead
integrate a density. In particular, a density can be integrated over
a supermanifold regardless of whether it is $(1,0)$-orientable. 

One defines the Lie derivative of an integral form analogously to
how we defined the Lie derivative of a vector field. Of importance
in future sections is the following fact: if $\mu$ is a compactly
supported density on a supermanifold and $V$ is a vector field, then
\[
\int_{M}\mathcal{L}_{V}\mu=0.
\]
This statement follows from the fact that integration is invariant
under diffeomorphisms.

\subsection{Subsupermanifolds}

Let $M$ be a supermanifold. A closed subsupermanifold $N$ is given
by a morphism $i:N\rightarrow M$ such that restricted to the reduced
manifolds, $i$ is a closed embedding and $di:TN\rightarrow i^{*}TM$
is an inclusion of a direct summand. Equivalently, a closed subsupermanifold
can be given by a sheaf of ideals $I$ of $\mathcal{O}_{M}$ such
that for every point $p\in M_{\red}$, there exists a coordinate chart
$U\ni p$ with coordinates $\{x^{1},\dots,x^{m},\theta^{1},\dots,\theta^{n}\}$
such that $I|_{U}=\langle x^{1},\dots,x^{k},\theta^{1},\dots,\theta^{l}\rangle$
for some $k,l\in\N$. One can then define $N_{\red}\subset M_{\red}$
as the set of points where the stalks of $I$ are proper ideals of
$\mathcal{O}_{M}$ and the structure sheaf $\mathcal{O}_{N}$ as $i_{\red}^{-1}\left(\mathcal{O}_{M}/I\right)$.
Conversely, given an embedding $i:N\rightarrow M$ as above, the sheaf
of ideals defined by 
\[
I(U)=\{\psi\in\mathcal{O}_{M}|i^{*}\phi=0\}
\]
satisfies the desired property.

\subsection{Vanishing Subsupermanifolds of Sections of Vector Bundles}

Let $E$ be a vector bundle on $M$ and $s$ a section of $E$. We
define the vanishing space of $s$. We work locally on $M_{\red}$.
Given a point $p\in M_{\red}$ there exists an open set $U\ni p$
such that $E$ is trivial when restricted to $U$. Pick $e_{1},\dots,e_{k}$,
sections of $E|_{U}$ trivializing the bundle. A section $s$ can
then be written as
\[
s=\sum_{i=1}^{k}e_{i}s_{i}
\]
where $s_{i}\in\mathcal{O}_{U}$. We would like to define the vanishing
space of $s$ by the ideal 
\[
I_{s}:=\langle s_{1},\dots,s_{k}\rangle\subset\mathcal{O}_{U}.
\]
The ideal $I_{s}$ does not depend on the choice of trivialization
of $E$. In general though, $I_{s}$ does not define a subsupermanifold
i.e. it is not locally generated by coordinate functions. 
\begin{defn}
For $s$ a section of a vector bundle, we say that the vanishing space
of $s$ is a subsupermanifold if the ideal $I_{s}$ defines a subsupermanifold
$N\subset M$. In that case we call $N$ the vanishing subsupermanifold
of $s$.
\end{defn}
There is a more direct way to define the ideal $I_{Q}$ for a vector
field $Q$ on $M$. In particular, $I_{Q}$ is generated by $Q(\cO_{M})$.
Assume that the vanishing space of $Q$ is a subsupermanifold $N$.
Then, just as classically, $\mathcal{L}_{Q}$ gives rise to a linear
map
\[
L:TM|_{N}\rightarrow TM|_{N}
\]
where $TM|_{N}$ is the pullback $i^{*}TM$ where $i:N\hookrightarrow M$
is the inclusion of the subsupermanifold $N$. Moreover, $L$ vanishes
on $TN\subset TM$ and therefore descends to a map 
\[
TM/TN=:\nu_{N}\rightarrow TM|_{N}\rightarrow\nu_{N}.
\]

\begin{defn}
Let $Q$ be a vector field on $M$ and assume that the vanishing space
of $Q$ is a subsupermanifold $N.$ We call the vanishing space non-degenerate
if the map $L:\nu_{N}\ra\nu_{N}$ induced by the Lie derivative $\cL_{Q}$
is an automorphism. 
\end{defn}
The critical space of a function $\psi\in\cO_{M}$ is defined as the
vanishing space of the 1-form $d\phi$. Similarly to the case of vector
fields, the ideal $I_{d\phi}$ is generated by $\{Q\psi|Q\in TM\}$.
Assume that the critical space of $\psi$ is a subsupermanifold $N$.
The Hessian of $\psi$ is a symmetric bilinear form on $TM|_{N}$
defined by 
\[
\hess(\psi)(X,Y)=X'Y'\psi|_{N}
\]
where $X',Y'$ are any vector fields on $M$ which restrict to $X,Y$
on $N$. The Hessian vanishes on $TN$ and therefore descends to the
normal bundle $\nu_{N}$.
\begin{defn}
Let $\psi$ be a function on a supermanifold $M$ whose critical subspace
is a subsupermanifold $N$. We say that $N\subset M$ is non-degenerate
if $\hess(\psi)$ restricted to $\nu_{N}$ is a non-degenerate bilinear
form.
\end{defn}

\section{Stationary Phase Approximation\label{sec:Stationary-Phase-Approximation}}

\subsection{Stationary Phase for Ordinary Manifolds}

We first state the result in 1-dimension.
\begin{prop}
\label{prop:1-d_stationary_phase} Let $f$ be a compactly supported
function on $\mathbb{R}$ and $S$ a smooth function on $\mathbb{R}$
having a single non-degenerate critical point at $0$. We then have
\[
\int fe^{i\lambda S}dx=e^{i\lambda S(0)}\frac{1}{\sqrt{\abs{S''(0)}}}e^{sgn(S''(0))\frac{\pi}{4}i}\left(\frac{2\pi}{\lambda}\right)^{\frac{1}{2}}\left(f(0)+O(\lambda^{-1})\right)
\]
as $\lambda\ra\infty$.
\end{prop}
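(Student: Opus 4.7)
The plan is to reduce the oscillatory integral to a model Fresnel integral by combining a cutoff argument with the ordinary Morse lemma, and then to evaluate the model integral explicitly. First I would choose $\chi\in C_c^\infty(\R)$ with $\chi\equiv 1$ on a neighborhood $U$ of $0$ on which the Morse lemma applies to $S$, and split
\[
\int f\,e^{i\lambda S}\,dx=\int f\chi\,e^{i\lambda S}\,dx+\int f(1-\chi)\,e^{i\lambda S}\,dx.
\]
On the support of $f(1-\chi)$ the derivative $S'$ is bounded away from $0$, so the identity $e^{i\lambda S}=(i\lambda S')^{-1}\tfrac{d}{dx}e^{i\lambda S}$ combined with $N$-fold integration by parts shows the second term is $O(\lambda^{-N})$ for every $N$, so it is absorbed into the stated error.

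Next I would invoke the classical Morse lemma near the non-degenerate critical point $0$: there is a smooth local diffeomorphism $\phi$ with $\phi(0)=0$ and $\phi'(0)=1$ such that $S(\phi(y))=S(0)+\tfrac12 S''(0)y^2$. Changing variables $x=\phi(y)$ in the localized integral converts it into
\[
e^{i\lambda S(0)}\int g(y)\,e^{i\lambda\frac{1}{2}S''(0)y^2}\,dy,\qquad g(y):=\chi(\phi(y))f(\phi(y))\phi'(y),
\]
where $g\in C_c^\infty(\R)$ and $g(0)=f(0)$.

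To evaluate this I would rescale $y=s/\sqrt{\lambda}$, giving $\lambda^{-1/2}\int g(s/\sqrt{\lambda})e^{i\frac{1}{2}S''(0)s^2}\,ds$, and Taylor-expand $g(s/\sqrt{\lambda})=g(0)+\sum_{k\geq 1}g^{(k)}(0)s^k/(k!\lambda^{k/2})$. The odd-power terms integrate to zero against the symmetric Fresnel kernel, and the even remainder contributes at most $O(\lambda^{-1})$ relative to the leading term. The leading contribution is $\lambda^{-1/2}f(0)\int e^{i\frac{1}{2}S''(0)s^2}\,ds$, which by the standard Fresnel identity $\int_{-\infty}^{\infty}e^{i\alpha s^2}\,ds=\sqrt{\pi/|\alpha|}\,e^{i\sgn(\alpha)\pi/4}$ with $\alpha=\tfrac12 S''(0)$ produces exactly the asserted prefactor $\tfrac{1}{\sqrt{|S''(0)|}}e^{i\sgn(S''(0))\pi/4}\sqrt{2\pi/\lambda}$.

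The main technical obstacle is handling the Taylor remainder rigorously, since the Fresnel integrals involved are only conditionally convergent. To justify both the closed-form evaluation of $\int e^{i\alpha s^2}\,ds$ and the $O(\lambda^{-1})$ bound on the tail of the Taylor expansion, one should either insert a regulator $e^{-\varepsilon s^2}$ and pass to the limit $\varepsilon\to 0$, or integrate by parts in $s$ to extract polynomial decay from the oscillating phase. All remaining work is careful bookkeeping of Jacobians and constants.
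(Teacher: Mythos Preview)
Your argument is the standard one and is correct in outline; the only delicate point you yourself flag is the justification of the $O(\lambda^{-1})$ bound on the Taylor remainder after rescaling, since the domain of integration in the $s$-variable grows like $\sqrt{\lambda}$. Either of the fixes you mention (an $e^{-\varepsilon s^2}$ regulator, or integration by parts in the original variable before rescaling) does the job, so this is a matter of execution rather than a gap.

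As for comparison with the paper: the paper does not prove this proposition at all. It simply states the result and refers the reader to \cite{asymptotics,bott_critical_point} for a proof. Your write-up is essentially the argument one finds in such references, so there is nothing to contrast beyond the fact that you have supplied what the paper chose to outsource.
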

For a proof, see for example \cite{asymptotics,bott_critical_point}.
In \cite{bott_critical_point}, the higher order terms are also computed.
To extend this result to higher dimensional manifolds, we need the
Morse-Bott Lemma. See \cite{morse_bott} for a proof.
\begin{prop}[Morse-Bott Lemma]
\label{prop:Morse-Bott} Let $S$ be a function on a manifold of
dimension $n$ such that its critical space is a non-degenerate submanifold
$N$. Then around each point $p\in N$, there exists a coordinate
system $\{x^{i}\}_{i=1}^{n}$ on a neighborhood $U\ni p$ in which
$S$ is expressed as 
\[
S=S(p)+\left(x^{1}\right)^{2}+\dots+\left(x^{\alpha}\right)^{2}-\left(x^{\alpha+1}\right)^{2}-\dots-\left(x^{k}\right)^{2}
\]
where $k$ is the codimension of $N$. 
\end{prop}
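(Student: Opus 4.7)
The plan is to adopt coordinates adapted to $N$, use Hadamard's lemma to isolate the transverse quadratic part of $S$, diagonalize that part smoothly in the coordinates along $N$, and then apply Moser's path method to absorb the remaining cubic error.

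First I pick coordinates $(y^1,\ldots,y^{n-k},z^1,\ldots,z^k)$ on a neighborhood $U\ni p$ so that $N\cap U=\{z=0\}$; this uses only that $N$ is a codimension-$k$ submanifold. Since every point of $N$ is critical for $S$, all first partials of $S$ vanish on $\{z=0\}$; in particular $S$ is locally constant on $N\cap U$, and after subtracting $S(p)$ we may assume $S$ and $dS$ vanish identically on $\{z=0\}$. Applying Hadamard's lemma twice in the $z$-variables, smoothly in $y$, yields
\[
S(y,z) \;=\; \sum_{i,j=1}^{k} A_{ij}(y,z)\, z^i z^j,
\]
with $A$ smooth and symmetric, and $2A(y,0)$ equals the transverse Hessian of $S$, which is non-degenerate by hypothesis (and hence has locally constant signature along $N\cap U$ by continuity of eigenvalues).

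Next I diagonalize the leading form. A smooth Gram--Schmidt/spectral procedure produces an invertible matrix $P(y)$ depending smoothly on $y$ with $P(y)^{T}A(y,0)P(y)=\mathrm{diag}(+1,\ldots,+1,-1,\ldots,-1)$, where $\alpha$ is the number of positive eigenvalues. After the $y$-dependent linear change $z\mapsto P(y)^{-1}z$, the constant-term matrix becomes $\mathrm{diag}(\pm 1)$; writing $Q := (z^1)^{2}+\cdots+(z^{\alpha})^{2}-(z^{\alpha+1})^{2}-\cdots-(z^{k})^{2}$, a further application of Hadamard shows $S-Q$ vanishes to order three in $z$. I then apply Moser's path method: set $S_t := (1-t)Q + tS$ for $t\in[0,1]$. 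Each $S_t$ has the same transverse Hessian $2\,\mathrm{diag}(\pm 1)$ at $z=0$, hence is transversally non-degenerate on a common neighborhood of $N$, uniformly in $t$. I seek a time-dependent vector field $X_t$, tangent to the $z$-fibres and vanishing quadratically on $\{z=0\}$, solving the homological equation $X_t(S_t) = Q - S$. Its flow $\phi_t$ is then defined on a neighborhood of $p$ for all $t\in[0,1]$ and fixes $N$ pointwise, and a direct calculation gives $\tfrac{d}{dt}\phi_t^{*}S_t = \phi_t^{*}(X_t S_t + \dot S_t) = 0$, so $\phi_1^{*}S = Q$ in the desired coordinates.

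The main obstacle is this last step: producing $X_t$ smoothly in $(y,z,t)$ with enough vanishing at $z=0$ for the flow to exist on a common neighborhood of $p$. The right-hand side $Q-S$ vanishes cubically in $z$ by Hadamard, while $\partial_{z^m}S_t = 2\,\mathrm{diag}(\pm 1)_{mn}z^n + O(z^2)$ has invertible leading linear term uniformly in $t$, so inverting this transverse Hessian and dividing the cubic right-hand side by it produces the quadratic coefficients of $X_t$. The non-degeneracy being uniform in $t$ is exactly why the convex combination was arranged to preserve the Hessian, and all smoothness is then routine after shrinking $U$.
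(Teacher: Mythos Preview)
The paper does not actually prove this proposition; it simply states the result and refers the reader to \cite{morse_bott} (Banyaga--Hurtubise). So there is no in-paper argument to compare against. Your proof is correct and is in fact close in spirit to the path-method proof given in that reference: adapted coordinates, Hadamard's lemma to extract the transverse quadratic form, a parametric diagonalization of the Hessian along $N$, and then a Moser-type homotopy to kill the cubic remainder.

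Two small remarks. First, drop the word ``spectral'' in your diagonalization step: an eigenvector-based diagonalization of $A(y,0)$ need not depend smoothly on $y$ when eigenvalues cross, whereas the Lagrange/Gram--Schmidt (completing-the-square) algorithm does produce a smooth $P(y)$, which is all you need. Second, your sketch of solving the homological equation is right but could be made sharper: since $\partial_{z^m}S_t$ vanishes at $z=0$ with Jacobian $2\,\mathrm{diag}(\pm1)$ independent of $t$, Hadamard gives $\partial_{z^m}S_t=\sum_n H_{mn}(y,z,t)\,z^n$ with $H(y,0,t)$ invertible, so on a shrunken neighborhood $z^n=\sum_m (H^{-1})^{nm}\,\partial_{z^m}S_t$; substituting this into the cubic Hadamard expansion of $Q-S$ immediately yields the desired $X_t$ with coefficients $O(|z|^2)$, smooth in $(y,z,t)$. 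This makes the uniformity in $t$ and the completeness of the flow near $p$ transparent.
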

Under the assumptions of the proposition, the Hessian $H$ of $S$
defines a non-degenerate bilinear form on the normal bundle $\nu_{N}$
of $N$ and therefore an element
\[
\sqrt{\mbox{det}^{-1}(H)}\in\det(\nu_{N})\otimes\ori(\nu_{N})
\]
as explained in \subref{Bilinear-Forms} and \subref{Square-Root-of-1-d-modules}.

Since we have the exact sequence 
\[
0\rightarrow TN\rightarrow TM\rightarrow\nu_{N}\rightarrow0,
\]
we have that 
\[
\det TM^{*}\otimes\ori(TM)\cong\left(\det TN^{*}\otimes\ori(TN)\right)\otimes\left(\det\nu_{N}^{*}\otimes\ori(\nu_{N})\right).
\]
In particular, contracting with the section $\sqrt{\mbox{det}^{-1}(H)}$,
gives a map
\[
\inner{\sqrt{\mbox{det}^{-1}(H)},\bullet}:\det TM^{*}\otimes\ori(TM)\rightarrow\det TN^{*}\otimes\ori(TN)
\]
from densities on $M$ to densities on $N$.
\begin{prop}
\label{prop:stationary_phase} Let $M$ be a compact manifold and
$\mu$ a density. Let $S$ be a smooth function such that its critical
space is a non-degenerate connected submanifold $N$ of codimension
$k$ and denote $H=\hess(S)$. We have 
\[
\int_{M}\mu e^{i\lambda S}=e^{\sgn(H)\frac{\pi}{4}i}\left(\frac{2\pi}{\lambda}\right)^{\frac{k}{2}}\left(\int_{N}e^{i\lambda S}\inner{\sqrt{\mbox{det}^{-1}H},\mu}+O(\lambda^{-1})\right)
\]
as $\lambda\ra\infty$ where $\sgn(H)$ is the signature of $H$ (the
number of positive eigenvalues minus the number of negative eigenvalues). \end{prop}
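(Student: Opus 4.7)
The plan is to localize the integral via a partition of unity, dispose of contributions away from $N$ by non-stationary phase, and on a neighborhood of $N$ reduce to iterated one-dimensional stationary phase (\propref{1-d_stationary_phase}) via the Morse--Bott Lemma (\propref{Morse-Bott}). Since $M$ is compact, cover $M$ by finitely many open sets of two types: (a) Morse--Bott charts $U_{p}$ about points $p\in N$, and (b) open sets $V\subset M\setminus N$. Let $\{\phi_{i}\}$ be a subordinate partition of unity. On a piece $V$ of type (b), $dS$ is nowhere zero on $\mathrm{supp}(\phi_{i})$, so one can choose a vector field $X$ with $X(S)=1$ there. The identity $e^{i\lambda S}=\tfrac{1}{i\lambda}\mathcal{L}_{X}e^{i\lambda S}$ combined with $\int\mathcal{L}_{X}(\cdot)=0$ for compactly supported densities yields, by iteration, $\int_{V}\phi_{i}\mu\,e^{i\lambda S}=O(\lambda^{-n})$ for every $n$, which is absorbed into the $O(\lambda^{-1})$ remainder.

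On a chart $U_{p}$ with Morse--Bott coordinates $(x^{1},\dots,x^{k},y^{1},\dots,y^{n-k})$, after rescaling the $x^{j}$ one has
\[
S=S|_{N}(y)+\sum_{j=1}^{\alpha}(x^{j})^{2}-\sum_{j=\alpha+1}^{k}(x^{j})^{2}.
\]
Writing the local density as $\phi_{i}\mu=f(x,y)|dx\,dy|$ with $f$ compactly supported, Fubini gives
\[
\int\phi_{i}\mu\,e^{i\lambda S}=\int e^{i\lambda S|_{N}(y)}\left(\int f(x,y)\,e^{i\lambda\sum_{j}\pm(x^{j})^{2}}\,dx\right)dy.
\]
Iterating \propref{1-d_stationary_phase} in each $x^{j}$-direction (with $S''=\pm 2$, each step contributing $\tfrac{1}{\sqrt{2}}e^{\pm\pi i/4}\sqrt{2\pi/\lambda}$) yields leading inner term $2^{-k/2}e^{(2\alpha-k)\pi i/4}(2\pi/\lambda)^{k/2}f(0,y)$. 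The induced Hessian on $\nu_{N}$ has signature $\sgn(H)=2\alpha-k$ and determinant of absolute value $2^{k}$ in these coordinates, so unwinding the constructions of \subref{Bilinear-Forms} and \subref{Square-Root-of-1-d-modules} identifies $2^{-k/2}f(0,y)|dy|$ with $\inner{\sqrt{\det^{-1}(H)},\phi_{i}\mu}$ as densities on $N\cap U_{p}$.

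Summing the local contributions over the finite partition and observing that $\{\phi_{i}|_{N}\}$ restricts to a partition of unity on $N$ reassembles the local formulas into $\int_{N}e^{i\lambda S|_{N}}\inner{\sqrt{\det^{-1}(H)},\mu}$ with combined $O(\lambda^{-1})$ error. The principal obstacles are (i) verifying the coordinate-free identification in the local step, which requires carefully matching the scaling $2^{-k/2}$ against the abstract $|\det H|^{-1/2}$ coming from \subref{Square-Root-of-1-d-modules}, and (ii) seeing that the local phases $e^{(2\alpha-k)\pi i/4}$ assemble to the global $e^{\sgn(H)\pi i/4}$, for which the connectedness hypothesis on $N$ is essential since it forces the Morse index $\alpha$ of $H|_{\nu_{N}}$ to be locally constant, hence constant. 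The non-stationary phase estimate and the iterated application of the one-dimensional proposition are routine.
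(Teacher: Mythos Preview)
Your proof is correct and follows essentially the same approach as the paper: non-stationary phase via a vector field to kill the contributions away from $N$, then partition of unity, the Morse--Bott Lemma, and iterated application of \propref{1-d_stationary_phase} to handle the local contributions near $N$. The paper's own proof is in fact considerably more terse---it simply asserts that ``the actual expression is gotten by a simple application of partition of unity, the Morse--Bott Lemma and the statement in the case of one dimension''---so your write-up supplies exactly the details the paper leaves implicit, including the bookkeeping of the $2^{-k/2}$ factor and the constancy of the Morse index along connected $N$.
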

\begin{proof}
If $S$ has no critical points in the support of $\mu$ then $\int_{M}\mu e^{i\lambda S}=O(\lambda^{-\infty})$.
To see it, note that there exists a vector field $V$ such that $V(S)$
is invertible on the support of $\mu$. By invariance of integration
under diffeomorphisms we have 
\[
0=\int\cL_{V}(\mu\frac{1}{i\lambda V(S)}e^{i\lambda S})=\frac{1}{i\lambda}\int\cL_{V}\left(\frac{\mu}{V(S)}\right)e^{i\lambda S}+\int\mu e^{i\lambda S}.
\]
We have $ $$\abs{\int\cL_{V}\left(\frac{\mu}{VS}\right)e^{i\lambda S}}\leq C$
for some constant $C$ since $\abs{e^{i\lambda S}}=1$ for all $\lambda$
and therefore $\int\mu e^{i\lambda S}=O(\lambda^{-1})$. Applying
the same argument $l$ times we get $\int_{M}\mu e^{i\lambda S}=O(\lambda^{-l})$
for any $l\in\N$.

This shows that the asymptotic behavior of $Z(\lambda)$ only depends
on a neighborhood of $N$. The actual expression is gotten by a simple
application of partition of unity, the Morse-Bott Lemma (\propref{Morse-Bott})
and the statement in the case of one dimension (\propref{1-d_stationary_phase}).
\end{proof}
As an example, assume that $N$ is an isolated point and we are working
in a fixed coordinate system $\{x^{1},\dots,x^{n}\}$. Then $\inner{\sqrt{\mbox{det}^{-1}H},dx^{1}\wedge\dots\wedge dx^{n}}=\frac{1}{\sqrt{\abs{\det H}}}$
and the first degree approximation is easily computed.

\subsection{Stationary Phase for Supermanifolds}

Let $M$ be a supermanifold, $\mu$ a compactly supported density
and $S$ an even function. We again study the asymptotic behavior
of 
\[
Z(\lambda)=\int_{M}\mu e^{i\lambda S}.
\]

If $dS$ is nowhere vanishing, then $Z(\lambda)=O(\lambda^{-\infty})$
for the same reason as in the proof of \propref{stationary_phase}.
We proceed in the same way as we did in the classical case. We first
show that near a non-degenerate critical subsupermanifold, an even
function $S$ has the standard form in appropriate coordinates. We
will then write a formula for stationary phase approximation that
is easy to verify in standard coordinates.
\begin{thm}
\label{thm:super_morse_bott1} Let $S$ be an even function on a supermanifold
of dimension $n|m$ such that its critical space is a non-degenerate
subsupermanifold $N$ of codimension $k|l'$. Then $l'=2l$ is even
and for each point $p\in N_{\red}$, there exists a coordinate system
$\{x^{1},\dots,x^{n},\theta^{1},\dots,\theta^{m}\}$ on a neighborhood
$U\ni p$ in which $S$ is expressed as
\[
S=S(p)+\left(x^{1}\right)^{2}+\dots+\left(x^{\alpha}\right)^{2}-\left(x^{\alpha+1}\right)^{2}-\dots-\left(x^{k}\right)^{2}+\theta^{1}\theta^{2}+\dots+\theta^{2l-1}\theta^{2l}.
\]
\end{thm}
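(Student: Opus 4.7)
The plan is to adapt the classical proof of the Morse-Bott lemma, replacing Sylvester's theorem and smooth diagonalization by their super-linear-algebra analogs. First I would choose coordinates $(y^1, \ldots, y^{n-k}, \nu^1, \ldots, \nu^{m-l'}, x^1, \ldots, x^k, \theta^1, \ldots, \theta^{l'})$ on a neighborhood of $p$ adapted to $N$, so that $N$ is locally cut out by $x=0, \theta=0$. Since $dS|_N = 0$ and $S$ is locally constant on $N$ (equal to $S(p)$ on the connected component containing $p$), two applications of a super Hadamard lemma give an expansion
\[
S = S(p) + \sum_{I,J} z^I z^J g_{IJ}(y, \nu, x, \theta),
\]
where $z^I$ ranges over the normal coordinates and $g_{IJ}$ is a super-symmetric matrix of functions whose restriction to $N$ represents one-half the Hessian $\hess(S)|_{\nu_N}$.

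Second, I would explain why $l'$ must be even. The form $\hess(S)|_{\nu_N}$ is a non-degenerate even symmetric bilinear form on $\nu_N$; its restriction to the odd sub-bundle $(\nu_N)_1$ is, by the Koszul sign rule, skew-symmetric in the ungraded sense, so non-degeneracy forces $l' = 2l$. A super version of Sylvester's theorem then produces, at each point of $N$, a basis of $\nu_N$ in which $\hess(S)$ takes the standard block form: signature $(\alpha, k-\alpha)$ on the even part and the Darboux form $\theta^1\theta^2 + \cdots + \theta^{2l-1}\theta^{2l}$ on the odd part.

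Third, I would produce a pointwise normalization at $p$ by performing an invertible linear change among the normal coordinates $\{z^I\}$ that brings $g_{IJ}(p)$ into the standard block form. This rewrites the expansion above as $S = S(p) + Q_0(\tilde z) + r$, where $Q_0$ is the standard quadratic form of the theorem and the remainder $r$ vanishes to order $\geq 3$ at $p$ in the normal directions.

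The main obstacle is the final step: upgrading this pointwise normalization to a normal form on a full neighborhood of $p$. I would carry this out by a super-Moser deformation argument. Set $S_t := S(p) + Q_0 + t r$ and seek an even time-dependent vector field $V_t$, vanishing along $\{\tilde z = 0\}$ to second order at $p$, satisfying $\mathcal{L}_{V_t} S_t = -r$. With the ansatz $V_t = \sum_I \bigl(\sum_J h^{IJ}_t\, \tilde z^J \bigr) \partial_{\tilde z^I}$ and $r$ expanded by super Hadamard, this becomes a linear algebraic equation on the coefficients $h^{IJ}_t$ whose unique smooth solution is guaranteed by the non-degeneracy of the Hessian of $S_t$, which persists for all $t \in [0,1]$ on a sufficiently small neighborhood of $p$. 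Integrating the flow of $V_t$ from $t=0$ to $t=1$ produces the local diffeomorphism whose pullback brings $S$ into the desired standard form. The delicate point is ensuring that the flow is defined on the full time interval and on a neighborhood shared by all $t$, which is handled by shrinking $U$ if necessary and appealing to openness of the non-degeneracy condition.
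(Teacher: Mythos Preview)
Your approach is correct but genuinely different from the paper's. The paper does not use a Moser deformation. Instead, after showing $l'=2l$ exactly as you do, it proceeds by a filtration argument: letting $J$ be the ideal generated by the odd functions, it proves by induction on $a$ that one can choose coordinates in which $S$ has the standard form modulo $J^{2a}$, and since $J$ is nilpotent this terminates. The base case ($a=2$) applies the classical Morse--Bott lemma to the even part, a parametrized symplectic Gram--Schmidt to the odd part, and then uses the fact that $S\in I^{2}$ (with $I$ the ideal of $N$) together with the $(\Z/2\Z)^{m}$-grading induced by the chosen odd coordinates to control cross terms; each step of the induction is then an \emph{explicit} polynomial change of coordinates eliminating the next layer of error. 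By contrast, your argument treats the even and odd normal directions uniformly via a single homotopy $S_{t}$ and a time-dependent even vector field $V_{t}$ solving $V_{t}(S_{t})=-r$, with solvability coming from the invertibility of the (super) Hessian matrix arising in the Hadamard expansion of $\partial_{\tilde z^{I}}S_{t}$. What the paper's route buys is concreteness and the ability to invoke the ordinary Morse--Bott lemma as a black box, at the cost of somewhat delicate bookkeeping with the ideals $I$ and $J$. What your route buys is a cleaner, coordinate-free mechanism (the same one used for the classical Morse lemma \`a la Palais/Moser), at the cost of having to check that the super Hadamard lemma and the flow of an even time-dependent vector field behave as expected, and of tracking the Koszul signs in the equation for $h^{IJ}_{t}$; note in particular that ``unique'' is too strong---the symmetrization ambiguity means you only get existence, which is all you need.
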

\begin{proof}
To see that $l'$ is even, consider the vector bundle $j^{*}(\nu_{N})$
where $j:N_{red}\hookrightarrow N$ and $\nu_{N}$ is the normal bundle
to $N$. This bundle has a canonical decomposition into even and odd
submodules since $\mathcal{O}_{N_{red}}$ is purely even. In particular,
the restriction of $H:=\hess(S)$ to $j^{*}(\nu_{N})_{1}$ is a non-degenerate
skew-symmetric bilinear form since $H$ is non-degenerate. This forces
$j^{*}(\nu_{N})_{1}$ to be even dimensional which implies that $l'$
is even.

We prove the theorem by induction where we induct on $a\in\mathbb{N}$
in the statement that there exists a coordinate system $\{x^{1},\dots,x^{n},\theta^{1},\dots,\theta^{m}\}$
on a neighborhood $U\ni p$ such that 
\[
S=S(p)+\left(x^{1}\right)^{2}+\dots+\left(x^{\alpha}\right)^{2}-\left(x^{\alpha+1}\right)^{2}-\dots-\left(x^{k}\right)^{2}+\theta^{1}\theta^{2}+\dots+\theta^{2l-1}\theta^{2l}\mod J^{2a}
\]
where $J$ is the ideal generated by odd functions. The theorem is
proved by noting that when $a>\frac{m}{2}$, the ideal $J^{2a}$ is
the zero ideal. We first consider the base case where $a=2$.

Pick a coordinate system $\{x^{1},\dots,x^{n},\theta^{1},\dots,\theta^{m}\}$
on a neighborhood $U$ of $p\in N$ in which $N$ is given by vanishing
of the ideal $I=\langle x^{1},\dots,x^{k},\theta^{1},\dots,\theta^{2l}\rangle$.
By the Morse-Bott Lemma (\propref{Morse-Bott}), by changing the even
coordinates and subtracting the constant $S(p)$, we may assume that
\[
S=\left(x^{1}\right)^{2}+\dots+\left(x^{\alpha}\right)^{2}-\left(x^{\alpha+1}\right)^{2}-\dots-\left(x^{k}\right)^{2}+\sum_{i,j=1}^{m}S_{ij}\theta^{i}\theta^{j}\mod J^{4}.
\]
where $(S_{ij})$ is a skew-symmetric matrix of functions on $U_{\red}.$
Non-degeneracy of $N$ implies that $(S_{ij})_{i,j=1}^{2l}$ is non-degenerate
on $N_{\red}\cap U_{\red}$. Since non-degeneracy is an open condition
we may take $U$ small enough so that $(S_{ij})_{i,j=1}^{2l}$ is
non-degenerate on $U_{\red}$. By a method analogous to the Gram-Schmidt
process, every symplectic vector space possesses a symplectic basis.
By a parametrized version of the same method, a symplectic vector
bundle locally possesses a symplectic basis of sections. Applying
this to the trivial $\R^{2l}$ bundle over $U_{\red}$ with the symplectic
form $(S)_{i,j=1}^{2l}$, there exists a non-degenerate matrix $(T_{ij})_{i,j=1}^{2l}$
of functions on $U_{\red}$ such that 
\[
T^{T}ST=\frac{1}{2}\left(\begin{matrix}0 & -1\\
1 & 0\\
 &  & \ddots\\
 &  &  & 0 & -1\\
 &  &  & 1 & 0
\end{matrix}\right).
\]
By changing the odd coordinates $\theta^{1},\dots,\theta^{2l}$ in
$U$ by 
\[
\theta_{{\rm old}}^{i}=\sum_{i=1}^{2l}T_{ij}\theta_{{\rm new}}^{i}
\]
we have 
\[
S=\left(x^{1}\right)^{2}+\dots+\left(x^{\alpha}\right)^{2}-\left(x^{\alpha+1}\right)^{2}-\dots-\left(x^{k}\right)^{2}+\theta^{1}\theta^{2}+\dots+\theta^{2l-1}\theta^{2l}+\sum_{\substack{i=1\\
j=2l+1
}
}^{m}S_{ij}'\theta^{i}\theta^{j}\mod J^{4}
\]
for some functions $S_{ij}^{'}$ such that $S_{ij}'=-S_{ji}^{'}$
if $i,j>2l$.

To simplify notation, define a function $S_{st}$ of $k$ even variables
and $2l$ odd variables by 
\[
S_{st}(x^{1},\dots,\theta^{2l})=\left(x^{1}\right)^{2}+\dots+\left(x^{\alpha}\right)^{2}-\left(x^{\alpha+1}\right)^{2}-\dots-\left(x^{k}\right)^{2}+\theta^{1}\theta^{2}+\dots+\theta^{2l-1}\theta^{2l}
\]
so that we have 
\begin{equation}
S=S_{st}(x^{1},\dots,\theta^{2l})+\sum_{\substack{i=1\\
j=2l+1
}
}^{m}S_{ij}'\theta^{i}\theta^{j}\mod J^{4}.\label{eq:S=00003DS_st+...}
\end{equation}

Assume without loss of generality that $N$ is connected. The function
$S$ vanishes on $N_{\red}$ and since it is constant on $N$, it
vanishes on $N$. Since both $S$ and $dS$ vanish on $N$, it follows
that $S\in I^{2}$ where $I=\langle x^{1},\dots,x^{k},\theta^{1},\dots,\theta^{2l}\rangle$
is the ideal of functions vanishing on $N$. The choice of coordinates
$\{x^{1},\dots,x^{n},\theta^{1},\dots,\theta^{m}\}$ introduces a
grading by $\left(\Z/2\Z\right)^{m}$ on $\cO_{M}$ in which functions
of degree $\{d_{i}\}_{i=1}^{m}\in\left(\Z/2\Z\right)^{m}$ are those
of the form $f(x^{1},\dots,x^{n})\prod_{i=1}^{m}\left(\theta^{i}\right)^{d_{i}}.$
The ideal $I^{2}$ is a graded ideal with respect to this grading
and therefore by decomposing $S$ into its homogeneous components,
each term $S_{ij}'\theta^{i}\theta^{j}$ in \eqref{S=00003DS_st+...}
belongs to $I^{2}$. We thus have $S_{ij}'$ vanishes on $N_{\red}\cap U_{\red}$
at least to second order if $i>2l$ and at least to first order if
$i\leq2l$. Thus

\[
S=S_{st}(x^{1},\dots,\theta^{2l})+\sum_{\beta,\gamma=1}^{k}x^{\beta}x^{\gamma}F_{\beta\gamma}+\sum_{i=1}^{2l}G_{i}\theta^{i}\mod J^{4}
\]
for some functions $F_{\beta\gamma}\in J^{2}$ and $G_{i}\in J\cap\langle x^{1},\dots,x^{k}\rangle$
where 
\[
\sum_{\beta,\gamma=1}^{k}x^{\beta}x^{\gamma}F_{\beta\gamma}=\sum_{\substack{i=2l+1\\
j=2l+1
}
}^{m}S_{ij}'\theta^{i}\theta^{j},
\]
\[
\sum_{i=1}^{2l}G_{i}\theta^{i}=\sum_{i=1}^{2l}\sum_{j=2l+1}^{m}S_{ij}'\theta^{i}\theta^{j}.
\]

Consider the following change of coordinates:
\[
\tilde{\theta}^{i}=\begin{cases}
\theta^{i}+(-1)^{i}G_{i+(-1)^{i+1}} & i\leq2l\\
\theta^{i} & i>2l
\end{cases}.
\]
Since $G_{i}\in\langle x^{1},\dots,x^{k}\rangle$, up to taking a
smaller neighborhood $U\ni p$, the above is an invertible transformation
on $U$ and 
\[
S=S_{st}(x^{1},\dots,\tilde{\theta}^{2l})+\sum_{\beta,\gamma=1}^{k}x^{\beta}x^{\gamma}F'_{\beta\gamma}\mod J^{4}
\]
for some functions $F'_{\beta\gamma}\in J^{2}$ satisfying $F'_{\beta\gamma}=F'_{\gamma\beta}$
. Consider the following change of coordinates: 
\[
\tilde{x}^{\beta}=\begin{cases}
x^{\beta}+\frac{1}{2}\sum_{\gamma=1}^{k}x^{\gamma}F'_{\beta\gamma} & \beta\leq\alpha\\
x^{\beta}-\frac{1}{2}\sum_{\gamma=1}^{k}x^{\gamma}F'_{\beta\gamma} & \alpha<\beta\leq k\\
x^{\beta} & k<\beta
\end{cases}.
\]
This too is an invertible transformation on a neighborhood of $p$
and 
\[
S=S_{st}(\tilde{x}^{1},\dots,\tilde{\theta}^{2l})\mod J^{4}.
\]
This establishes the base case for the induction.

We now consider the inductive step. Assume there exists a coordinate
system $\{x^{1},\dots,x^{n},\theta^{1},\dots,\theta^{m}\}$ on a neighborhood
$U\ni p$ such that 
\[
S=S_{st}(x^{1},\dots,\theta^{2l})\mod J^{2a}
\]
with $a\geq2$. As above, since $S\in I^{2}$, we have
\[
S=S_{st}(x^{1},\dots,\theta^{2l})+\sum_{\beta,\gamma=1}^{k}x^{\beta}x^{\gamma}F_{\beta\gamma}+\sum_{i=1}^{2l}G_{i}\theta^{i}\mod J^{2(a+1)}
\]
for some functions $F_{\beta\gamma}\in J^{2a}$ and $G_{i}\in J^{2a-1}$
. Changing the coordinates by
\begin{eqnarray*}
\tilde{x}^{\beta} & = & \begin{cases}
x^{\beta}+\frac{1}{2}\sum_{\gamma=1}^{k}x^{\gamma}F_{\beta\gamma} & \beta\leq\alpha\\
x^{\beta}-\frac{1}{2}\sum_{\gamma=1}^{k}x^{\gamma}F_{\beta\gamma} & k\leq\beta>\alpha\\
x^{\beta} & \beta>k
\end{cases}
\end{eqnarray*}
\[
\tilde{\theta}^{i}=\begin{cases}
\theta^{i}+(-1)^{i}G_{i+(-1)^{i+1}} & i\leq2l\\
\theta^{i} & i>2l
\end{cases}
\]
we have
\[
S=S_{st}(\tilde{x}^{1},\dots,\tilde{\theta}^{2l})\mod J^{2(a+1)}.
\]
We have used that $G_{i}G_{j}\in J^{4a-2}\subset J^{2(a+1)}$ which
is only true when $a\geq2$. This is why we had to be more careful
when discussing the base case. \end{proof}
\begin{thm}
\label{thm:super_stationary_phase_approx} Let $S$ be an even function
on a supermanifold $M$ such that its critical space is a non-degenerate
connected subsupermanifold $N$ of codimension $k|2l$. Let $\mu\in\ber(TM^{*})\otimes\ori_{(1,0)}$
be a compactly supported density and define $Z(\lambda):=\int_{M}\mu e^{i\lambda S}$.
Denote by $H$ the Hessian of $S$ and by $H_{red}$ the Hessian of
$S|_{M_{\red}}$. We have 
\[
Z(\lambda)=e^{i\lambda S(N_{red})}e^{\frac{\pi}{4}i\cdot\sgn(H_{\red})}\left(\frac{2\pi}{\lambda}\right)^{\frac{k}{2}}\left(-i\lambda\right)^{l}\int_{N}\inner{\sqrt{\ber^{-1}H}\otimes\ori_{(0,1)}(H),\mu}\cdot\left(1+O(\lambda^{-1})\right)
\]
where $\sgn(H_{\red})$ is the signature of $(H_{\red})$. 
\end{thm}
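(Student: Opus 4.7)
The plan is to follow the outline given at the start of \secref{Stationary-Phase-Approximation}. First I would argue that the asymptotics of $Z(\lambda)$ is determined by any neighbourhood of $N_{\red}$: on any open set $U \subset M$ where $dS$ is nowhere vanishing, I choose a vector field $V$ with $V(S)$ invertible on $U$ and iterate the identity
\[
0 = \int \cL_V\!\left(\mu\, \tfrac{1}{i\lambda V(S)}\, e^{i\lambda S}\right) = \tfrac{1}{i\lambda}\int \cL_V\!\left(\tfrac{\mu}{V(S)}\right) e^{i\lambda S} + \int \mu\, e^{i\lambda S}
\]
exactly as in the proof of \propref{stationary_phase}, concluding that $\int_U \mu\, e^{i\lambda S} = O(\lambda^{-\infty})$. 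A partition of unity then reduces the problem to the asymptotic behaviour of the integral over an arbitrarily small neighbourhood of any chosen point $p \in N_{\red}$.

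On such a neighbourhood $U$, \thmref{super_morse_bott1} supplies coordinates $x^1,\dots,x^n,\theta^1,\dots,\theta^m$ in which $S = S(p) + S_{st}(x^1,\dots,x^k,\theta^1,\dots,\theta^{2l})$, and I write $\mu = [dx^1,\dots,d\theta^m]\, f$ in these coordinates. Because $(\theta^{2j-1}\theta^{2j})^2 = 0$, the exponential factorises as
\[
e^{i\lambda S} = e^{i\lambda S(p)}\, \exp\Bigl(i\lambda\sum_{\beta=1}^{\alpha}(x^{\beta})^2 - i\lambda\sum_{\beta=\alpha+1}^{k}(x^{\beta})^2\Bigr)\prod_{j=1}^{l}\bigl(1 + i\lambda\,\theta^{2j-1}\theta^{2j}\bigr).
\]
Berezin integration over the transverse odd variables $\theta^1,\dots,\theta^{2l}$ extracts coefficients of monomials in them, so I expand $f$ as a polynomial in $\theta^1,\dots,\theta^{2l}$ and integrate term by term. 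The leading $\lambda$-asymptotic comes from the term where all $l$ factors $i\lambda\,\theta^{2j-1}\theta^{2j}$ are chosen from the product, multiplied by the piece of $f$ independent of $\theta^1,\dots,\theta^{2l}$; this contributes the overall $(-i\lambda)^l$, with the sign fixed by the Berezin convention and the Pfaffian orientation of the odd Hessian block, and every other term contributes strictly lower powers of $\lambda$ in the final asymptotic. What remains is an ordinary oscillatory integral over $x^1,\dots,x^n$ of a density on $N_{\red}$ with pure quadratic phase, to which I apply \propref{stationary_phase}; this produces the factors $e^{\frac{\pi}{4}i\cdot\sgn(H_{\red})}$ and $(2\pi/\lambda)^{k/2}$, together with the restriction of the density to $N_{\red}$.

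The final step is a coordinate-invariance check: I verify that the local expression just produced coincides with $\inner{\sqrt{\ber^{-1}H}\otimes \ori_{(0,1)}(H),\mu}$ defined via \subref{Bilinear-Forms} and \subref{Square-Root-of-1-d-modules}. In standard coordinates $H$ is block diagonal with even-even block $\mathrm{diag}(\pm 2,\dots,\pm 2)$ and odd-odd block a direct sum of copies of $\bigl(\begin{smallmatrix} 0 & -1 \\ 1 & 0 \end{smallmatrix}\bigr)$, so $\ber(H)$ reduces to $\det(H_{\red})$, both $\ori_{(0,1)}(H)$ and $\sqrt{\ber^{-1}H}$ can be read off directly, and contracting them with $\mu$ via the natural pairing of \subref{Square-Root-of-1-d-modules} reproduces the classical prefactor $|\det H_{\red}|^{-1/2}$ together with the density on $N$. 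I expect the main obstacle to be bookkeeping rather than analytic subtlety: one has to track signs coming from the Koszul rule, from the Berezin integration convention, from the Pfaffian-based orientation $\ori_{(0,1)}(H)$, and from the square-root convention of \subref{Square-Root-of-1-d-modules} consistently enough to pin down both the $(-i\lambda)^l$ coefficient and the correct $\sgn(H_{\red})$ in the exponent, and to confirm that the abstract definition of $\sqrt{\ber^{-1}H}$ really matches the explicit local model calculation.
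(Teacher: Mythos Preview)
Your proposal is correct and follows essentially the same route as the paper: localize via the $dS\neq 0$ argument and a partition of unity, apply \thmref{super_morse_bott1}, expand $e^{i\lambda S_1}$ as a finite sum to isolate the $(i\lambda)^l\theta^1\cdots\theta^{2l}$ top term, then invoke ordinary stationary phase and match the local expression with the invariant one, tracking the $(-1)^l$ via the Pfaffian of the odd block. One phrasing wrinkle: after Berezin integration over only the transverse odd variables $\theta^1,\dots,\theta^{2l}$ you still carry the tangential odd variables $\theta^{2l+1},\dots,\theta^m$, so what remains is not literally ``an ordinary oscillatory integral of a density on $N_{\red}$'' but an integral over $\R^{n|m-2l}$; the paper handles this by integrating over all odd variables at once (picking out the coefficient of $\theta^{2l+1}\cdots\theta^m$ in $f$) before applying \propref{stationary_phase}, and you should do the same so that \propref{stationary_phase} applies directly and the result is a genuine density on $N$, not $N_{\red}$.
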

To clarify statement of the theorem above, recall from \subref{Bilinear-Forms}
and \subref{Square-Root-of-1-d-modules} that $\sqrt{\ber^{-1}H}\in\ber(\nu_{N})\otimes\ori_{(1,1)}(\nu_{N})$
and therefore %
\footnote{\label{fn:Square_Root_on_Supermanifolds}Recall that to define $\sqrt{\ber^{-1}H}$
we need a homomorphism $\sgn:\cO_{M}^{\times}\ra\{\pm1\}$ and a square
root operator $\sqrt{\ }:\left(\cO_{M}^{\times}\right)_{>0}\ra\left(\cO_{M}^{\times}\right)_{>0}$.
The sign is given by restricting to $M_{\red}$. The square root is
given by the following local formula: picking a system of coordinates,
a positive function is given by $f=f_{0}+\psi$ where $f_{0}$ is
a positive function on $M_{\red}$ and $\psi\in J$ where $J$ is
the ideal generated by odd functions. We then have 
\[
\sqrt{f}=\sqrt{f_{0}}\left(\sqrt{1+\frac{\psi}{\sqrt{f_{0}}}}\right)=\sqrt{f_{0}}\sum_{n=0}^{\infty}\frac{(-1)^{n}(2n)!}{(1-2n)(n!)^{2}(4^{n})}\left(\frac{\psi}{\sqrt{f_{0}}}\right)^{n}.
\]
The above sum is finite since $\psi$ is nilpotent. One can check
that $\sqrt{f}$ is the unique positive function satisfying $\left(\sqrt{f}\right)^{2}=f$
and hence does not depend on the chosen coordinates.%
} 
\[
\sqrt{\ber^{-1}H}\otimes\ori_{(0,1)}(H)\in\ber(\nu_{N})\otimes\ori_{(1,0)}(\nu_{N}).
\]
The exact sequence 
\[
0\rightarrow TN\rightarrow TM\rightarrow\nu_{N}\rightarrow0
\]
gives an isomorphism 
\[
\ber(TM^{*})\otimes\ori_{(1,0)}(TM)\cong\left(\ber(\nu_{N}^{*})\otimes\ori_{(1,0)}(\nu_{N})\right)\otimes\left(\ber(TN^{*})\otimes\ori_{(1,0)}(TN)\right).
\]
The contraction $\inner{\sqrt{\ber^{-1}H}\otimes\ori_{(0,1)}(H)),\mu}\in\ber(TN^{*})\otimes\ori_{(1,0)}(TN)$
is then a density on $N$. 

We note that a similar statement with the integrand being real, i.e.
$\mu e^{-\lambda S}$, is discussed in \cite{Schwarz_Semiclassical_approximation}.
\begin{proof}
It suffices to check this proposition locally on $N$ with the general
statement following by using a partition of unity. By the Morse-Bott
Lemma generalized to supermanifolds (\thmref{super_morse_bott1}),
there exists a coordinate system in which 
\[
S=C+\left(x^{1}\right)^{2}+\dots+\left(x^{\alpha}\right)^{2}-\left(x^{\alpha+1}\right)^{2}-\dots-\left(x^{k}\right)^{2}+\theta^{1}\theta^{2}+\dots+\theta^{2l-1}\theta^{2l}
\]
where $C=S(N_{\red})$. To simplify notation, let 
\begin{eqnarray*}
S_{0} & = & C+\left(x^{1}\right)^{2}+\dots+\left(x^{\alpha}\right)^{2}-\left(x^{\alpha+1}\right)^{2}-\dots-\left(x^{k}\right)^{2}\\
S_{1} & = & \theta^{1}\theta^{2}+\dots+\theta^{2l-1}\theta^{2l}\\
S & = & S_{0}+S_{1}.
\end{eqnarray*}
Assume that $\mu$ is supported in this coordinate chart and thus
can be expressed as $\mu=[dxd\theta]f$. We have
\[
Z(\lambda)=\int_{\mathbb{R}^{p|q}}[dxd\theta]f\cdot(1+i\lambda S_{1}+\dots+\frac{(i\lambda S_{1})^{l}}{l!})e^{i\lambda S_{0}}.
\]
The highest degree term in $\lambda$ is $f\frac{(i\lambda S_{1})^{l}}{l!}e^{i\lambda S_{0}}$.
Since $\frac{S_{1}^{l}}{l!}=\theta^{1}\dots\theta^{2l}$, the integration
of this term picks up the coefficient of $\theta^{2l+1}\theta^{2l+2}\dots\theta^{q}$
in $f$. We have 
\[
Z(\lambda)=(i\lambda)^{l}\int_{\mathbb{R}^{p}}dx^{1}\dots dx^{p}\left(\p{\theta^{q}}\dots\p{\theta^{2l+1}}f\right)_{|\mathbb{R}^{p}}e^{i\lambda S_{0}}\left(1+O(\lambda^{-1})\right)
\]
and therefore applying the ordinary stationary phase approximation
(\propref{Morse-Bott}) we get 
\begin{eqnarray*}
Z(\lambda) & = & e^{i\lambda C}e^{\frac{\pi}{4}i\left(\alpha-(k-\alpha)\right)}\left(\frac{2\pi}{\lambda}\right)^{\frac{k}{2}}(i\lambda)^{l}\int_{\mathbb{R}^{p-k}}dx^{k+1}\dots dx^{p}\left(\p{\theta^{q}}\dots\p{\theta^{2l+1}}f\right)_{|\mathbb{R}^{p-k}}\cdot(1+O(\lambda^{-1}))\\
 & = & e^{i\lambda C}e^{\frac{\pi}{4}i\cdot\sgn(H_{\red})}\left(\frac{2\pi}{\lambda}\right)^{\frac{k}{2}}(-i\lambda)^{l}\int_{N}\inner{\sqrt{\ber^{-1}H}\otimes\ori_{(0,1)}(H),[dxd\theta]f}\cdot(1+O(\lambda^{-1})).
\end{eqnarray*}
The last equality is simply a calculation of $\inner{\sqrt{\ber^{-1}H}\otimes\ori_{(0,1)}(H),[dxd\theta]f}$
in local coordinates. The factor of $(-1)^{l}$ in the above equation
comes from the fact that the matrix of $H$ in the basis $\p{x^{1}},\dots,\p{x^{k}},\p{\theta^{1}},\dots\p{\theta^{2l}}$
of $\nu_{N}$ is 
\[
\left(\begin{matrix}1\\
 & \ddots\\
 &  & 1\\
 &  &  & 0 & -1\\
 &  &  & 1 & 0\\
 &  &  &  &  & \ddots\\
 &  &  &  &  &  & 0 & -1\\
 &  &  &  &  &  & 1 & 0
\end{matrix}\right).
\]
Since 
\[
\pff\left(\begin{matrix}0 & -1\\
1 & 0
\end{matrix}\right)=-1,
\]
the orientation $\ori_{(0,1)}(H)$ differs from the one given by $\p{\theta^{1}},\dots,\p{\theta^{2l}}$
by $(-1)^{l}$. 
\end{proof}

\section{Localization Theorem\label{sec:Localization-Theorem}}

\subsection{General Localization Statement\label{sub:General-Localization-Statement}}

The constructions and proofs in this subsection are taken from \cite{SuppersymmetryAndLocalization}.
We include the proofs for completeness.

Let $M$ be a supermanifold. We begin with a series of definitions.
\begin{defn}
We call an even vector field $V$ on $M$ compact if there exists
an action by a compact torus $T$ on $M$ and an element $X\in\mathfrak{\t}$
in the Lie algebra of $T$ which induces the vector field $V$.
\end{defn}

\begin{defn}
We denote the normal bundle to $M_{red}$ by $\alpha_{M}$. It is
a vector bundle on the ordinary manifold $M_{\red}$.
\end{defn}
An odd vector field $Q$ on $M$ naturally defines a section $\alpha(Q)$
of $\alpha_{M}$. Similarly, an odd function $\sigma$ defines a section
$\alpha^{*}(\sigma)$ of the conormal bundle $\alpha_{M}^{*}$ induced
by the 1-form $d\sigma$. If $\{x^{1},\dots,x^{n},\theta^{1},\dots,\theta^{m}\}$
are coordinates in some neighborhood such that an odd vector field
$Q$ is given by 
\[
Q=\sum_{\alpha=1}^{m}(b^{\alpha}(x)+\dots)\p{\theta^{\alpha}}+\sum_{i=1}^{n}(\dots)\p{x^{i}}
\]
where the terms in $(\dots)$ belong to the ideal $J$ generated by
odd functions, then $\alpha(Q)=\sum_{\alpha=1}^{m}b^{\alpha}\p{\theta^{\alpha}}$.
The section $\alpha^{*}(\sigma)$ for $\sigma$ an odd function is
defined similarly from an expression of $d\sigma$.
\begin{defn}
For $Q$ an odd vector field, let $R_{Q}\subset M_{\red}$ be the
vanishing set of $\alpha(Q)$.
\end{defn}
Note that $R_{Q}$ need not have a nice form for an arbitrary $Q$.
\begin{prop}[{\cite[Theorem 1]{SuppersymmetryAndLocalization}}]
\label{prop:general_localization_theorem}Let $M$ be a supermanifold,
$Q$ an odd vector field and $\mu\in\ber(TM^{*})\otimes\ori_{(1,0)}(TM)$
a compactly supported density. If $\mu$ is $Q$-invariant ($\mathcal{L}_{Q}\mu=0$),
and $Q^{2}$ is compact then for any neighborhood $U\supset R_{Q}$
in $M_{red}$, there exists an even $Q$-invariant function $g$ which
takes the value $1$ on some neighborhood $O\supset R_{Q}$ and vanishes
outside $U$. Moreover, for every such $g$ we have 
\[
\int_{M}\mu=\int_{M}\mu\cdot g.
\]
\end{prop}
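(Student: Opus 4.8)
The plan is to follow the Schwarz--Zaboronsky argument in two stages: first produce the auxiliary odd function $\sigma$ with $Q^2\sigma = 0$ and $Q\sigma$ invertible away from $R_Q$, then produce the cutoff $g$ and run the integration-by-parts trick. For the first stage, since $Q^2$ is compact, there is an action of a compact torus $T$ on $M$ and $X\in\t$ inducing $Q^2$. Averaging over $T$ with respect to Haar measure, I may assume $Q$ itself is $T$-invariant (this does not change $Q^2$, which already lives in $\t$), and that the density $\mu$ is $T$-invariant as well. Pick any $T$-invariant Riemannian metric on $M_{\red}$ and extend $\alpha(Q)$, a section of $\alpha_M$, to an odd vector field; use the metric to build an odd function $\sigma$ whose associated conormal section $\alpha^*(\sigma)$ pairs with $\alpha(Q)$ to a function that is positive (hence invertible) precisely off $R_Q$. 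Concretely, contracting $\alpha(Q)$ with the metric gives a $T$-invariant odd $1$-form, and averaging its ``potential'' over $T$ yields $\sigma$ with $Q^2\sigma=\mathcal L_{Q^2}\sigma = 0$ and with $Q\sigma$ equal, modulo $J^2$, to $|\alpha(Q)|^2$ on $M_{\red}$, so $Q\sigma$ is invertible on a neighborhood of any point of $M_{\red}\setminus R_Q$.

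For the second stage, fix the neighborhood $U\supset R_Q$. Choose a $T$-invariant open $O$ with $R_Q\subset O\subset\bar O\subset U$ (possible by averaging an ordinary such neighborhood over the compact group $T$), and a $T$-invariant smooth function $h$ on $M_{\red}$, hence a function on $M$ via pullback along $M_{\red}\hookrightarrow M$ composed with a $T$-invariant splitting — more simply, a $T$-invariant even function $g_0\in\cO_M$ — that equals $1$ on $O$ and vanishes outside $U$; such a function exists by a partition-of-unity argument carried out $T$-equivariantly. Replacing $g_0$ by its $T$-average leaves it $T$-invariant, and since $Q$ is $T$-invariant while $g_0$ is $T$-basic, one checks $Qg_0$ is supported in $\bar U\setminus O \subset M_{\red}\setminus R_Q$; however $g_0$ need not be exactly $Q$-invariant. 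To fix this, note $Q^2 g_0 = \mathcal L_{Q^2}g_0 = 0$ since $g_0$ is $T$-invariant, so $Qg_0$ is itself $Q$-closed; then set $g := g_0 - Q\!\left(\dfrac{\sigma}{Q\sigma}\,(1-g_0)\right)$ or the analogous correction, chosen so that $g$ remains equal to $1$ near $R_Q$, vanishes outside $U$ (both correction terms being supported where $1-g_0\ne 0$ and $Q\sigma$ invertible), and satisfies $Qg = 0$; here one uses $Q^2\sigma=0$ and $Q^2 g_0 = 0$ to verify $Qg=0$ directly. This produces the desired $g$.

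Finally, for \emph{any} even $Q$-invariant $g$ equal to $1$ near $R_Q$ and vanishing outside $U$, write
\[
\int_M \mu = \int_M g\mu + \int_M (1-g)\mu,
\]
and observe that $(1-g)\mu$ is a compactly supported density whose support is disjoint from $R_Q$, so $Q\sigma$ is invertible on that support. Then
\[
\int_M (1-g)\mu = \int_M \mathcal L_Q\!\left(\frac{\sigma}{Q\sigma}\,(1-g)\,\mu\right) = 0,
\]
using that $g$, $\sigma/Q\sigma$ and $\mu$ are all $Q$-invariant on the relevant region (so $\mathcal L_Q$ of the bracketed density is $(Q\sigma)\cdot\frac{1}{Q\sigma}(1-g)\mu = (1-g)\mu$, up to signs from the Koszul rule that I would track carefully) together with $\int_M \mathcal L_Q(\,\cdot\,)=0$ from invariance of integration under diffeomorphisms, stated in the Integration subsection. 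Hence $\int_M\mu = \int_M g\mu$.

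The main obstacle is the first stage: constructing $\sigma$ with simultaneously $Q^2\sigma=0$ \emph{and} $Q\sigma$ invertible off $R_Q$. The invertibility is a leading-order ($M_{\red}$-level) statement and is handled by the metric contraction, but arranging exact $Q^2$-invariance requires the $T$-averaging, and one must check this averaging does not destroy the invertibility of $Q\sigma$ near $M_{\red}\setminus R_Q$ — which holds because the leading term $|\alpha(Q)|^2$ is already $T$-invariant and positive there. Tracking the Koszul signs in the identity $\mathcal L_Q\bigl(\tfrac{\sigma}{Q\sigma}(1-g)\mu\bigr)=(1-g)\mu$ is the other routine-but-delicate point.
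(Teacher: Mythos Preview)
Your overall strategy matches the paper's: build the odd function $\sigma$ with $Q^{2}\sigma=0$ and $Q\sigma$ invertible off $R_{Q}$, use $\beta:=\sigma/(Q\sigma)$ to manufacture a $Q$-invariant cutoff, and then run the $\int_{M}\mathcal{L}_{Q}(\cdot)=0$ argument. The construction of $\sigma$ and the final integration-by-parts paragraph are essentially the paper's Lemma and its concluding computation. There is, however, a genuine error in your construction of $g$.

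With your formula $g=g_{0}-Q\bigl(\beta(1-g_{0})\bigr)$ one gets
\[
Qg \;=\; Qg_{0}-Q^{2}\bigl(\beta(1-g_{0})\bigr)\;=\;Qg_{0},
\]
since $Q^{2}\beta=0$ and $Q^{2}g_{0}=0$ force $Q^{2}\bigl(\beta(1-g_{0})\bigr)=0$. So the very facts you invoke ($Q^{2}\sigma=0$, $Q^{2}g_{0}=0$) show that your correction term is $Q$-closed and hence cannot kill $Qg_{0}$. Moreover, outside $U$ your $g$ equals $-Q\beta=-1$, not $0$. The fix is to replace the leading $g_{0}$ by $1$: set
\[
g\;:=\;1-Q\bigl(\beta(1-g_{0})\bigr).
\]
Then $Qg=-Q^{2}\bigl(\beta(1-g_{0})\bigr)=0$; on $O$ one has $1-g_{0}=0$ so $g=1$; outside $U$ one has $g_{0}=0$ so $g=1-Q\beta=0$. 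Expanding, $g=g_{0}-\beta\,Qg_{0}$, which makes the mechanism transparent. This is exactly the paper's construction in disguise: the paper takes a $T$-invariant partition of unity $\{f_{i}\}$ with $f_{i}$ (for $i\neq 0$) supported away from $R_{Q}$ and sets $g_{0}=1-\sum_{i\neq 0}Q(\beta f_{i})$; your $1-g_{0}$ plays the role of $\sum_{i\neq 0}f_{i}$.

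A smaller point: averaging $Q$ itself over $T$ is unnecessary and not obviously harmless (there is no reason the averaged vector field squares to the same $Q^{2}$). The paper never does this; it only averages $\sigma$ and the partition of unity over $T$, which suffices because one only needs $Q^{2}\sigma=0$ and $Q^{2}f_{i}=0$.
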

\begin{lem}[{\cite[Lemma 1]{SuppersymmetryAndLocalization}}]
\label{lem:sigma_function}Under the assumptions of the above proposition,
there exists an odd function $\sigma$ such that the zero set of $Q\sigma_{|M_{red}}$
coincides with $R_{Q}$ and $Q^{2}\sigma=0$.
\end{lem}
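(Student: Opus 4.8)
The plan is to build $\sigma$ by averaging a locally defined candidate against the torus action, so that the condition $Q^2\sigma=0$ comes for free. First I would work locally. Near a point of $M_{\red}$ choose coordinates $\{x^1,\dots,x^n,\theta^1,\dots,\theta^m\}$ in which the odd vector field has the form $Q=\sum_\alpha (b^\alpha(x)+\cdots)\p{\theta^\alpha}+\sum_i(\cdots)\p{x^i}$ as in the discussion of $\alpha(Q)$, where the omitted terms lie in the ideal $J$ generated by the odd functions. The natural local candidate is $\sigma_{\mathrm{loc}}=\sum_\alpha \overline{b^\alpha(x)}\,\theta^\alpha$, i.e. a linear combination of the $\theta^\alpha$ whose coefficients are (the pullbacks of) the components of $\alpha(Q)$; by construction $Q\sigma_{\mathrm{loc}}\big|_{M_{\red}}=\sum_\alpha (b^\alpha)^2+(\text{terms in }J)$ restricted to $M_{\red}$ equals $\sum_\alpha (b^\alpha)^2$, which vanishes exactly on the common zero set of the $b^\alpha$, namely $R_Q$. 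So locally the zero set of $Q\sigma_{\mathrm{loc}}\big|_{M_{\red}}$ is precisely $R_Q$.

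Next I would globalize using a partition of unity: cover $M_{\red}$ by such charts $\{U_i\}$ with subordinate even functions $\{\phi_i\}$ and set $\sigma_0=\sum_i \phi_i\,\sigma_{\mathrm{loc},i}$. On $M_{\red}$ one has $Q\sigma_0\big|_{M_{\red}}=\sum_i \phi_i\,(Q\sigma_{\mathrm{loc},i})\big|_{M_{\red}}$ up to terms coming from $Q$ hitting the even $\phi_i$, which I should arrange to vanish on $M_{\red}$ — this works because $Q(\phi_i)\in J$ (as $\phi_i$ is pulled back from $M_{\red}$ and $Q$ is odd), so it restricts to $0$. Thus $Q\sigma_0\big|_{M_{\red}}=\sum_i \phi_i \sum_\alpha (b^\alpha_i)^2\big|_{M_{\red}}$, a nonnegative function on $M_{\red}$ vanishing exactly on $R_Q$, because at each point at least one chart is active and the local expression vanishes only on $R_Q$.

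Finally I would enforce $Q^2\sigma=0$ by averaging. Since $Q^2=\frac12[Q,Q]$ is a compact even vector field coming from an element $X\in\t$ of the Lie algebra of a compact torus $T$ acting on $M$, I can replace $\sigma_0$ by its $T$-average $\sigma=\int_T (t\cdot\sigma_0)\,dt$ with respect to normalized Haar measure. The averaged function is $T$-invariant, hence annihilated by the vector field induced by any $X\in\t$, in particular by $Q^2$; here I should note that $Q^2$ being compact means it is genuinely the infinitesimal generator of a circle (or torus) sub-action, so $T$-invariance really does give $Q^2\sigma=0$, and one checks $Q$ commutes with the $T$-action on $Q^2$-related quantities (or simply that averaging preserves oddness and that $Q\sigma\big|_{M_{\red}}$ is still positive off $R_Q$ — averaging a function that is $\geq 0$ and positive off the closed $T$-invariant set $R_Q$ keeps it positive off $R_Q$, since $R_Q$ is $T$-invariant because $\alpha(Q)$ transforms equivariantly). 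That $R_Q$ is $T$-invariant is the one point that needs a short argument: $Q^2$ preserves $R_Q$ because $[Q,Q^2]=0$ forces the flow of $Q^2$ to fix the vanishing locus data of $Q$.

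The main obstacle I expect is the bookkeeping in the globalization step: controlling the cross terms where $Q$ differentiates the partition-of-unity functions and confirming that after $T$-averaging the function $Q\sigma\big|_{M_{\red}}$ still has zero set exactly $R_Q$ (rather than a larger set), together with verifying carefully that compactness of $Q^2$ is exactly what makes $T$-invariance imply $Q^2\sigma=0$ — everything else is a routine patching and averaging argument.
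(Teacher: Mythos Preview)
Your proposal is correct and follows the same overall strategy as the paper: build a candidate odd function out of the components of $\alpha(Q)$, then average over the torus $T$ to force $Q^{2}\sigma=0$. The paper's execution is a bit cleaner, however. Instead of constructing $\sigma_{0}$ chart by chart and patching with a partition of unity, the paper first picks a $T$-invariant inner product $g$ on $\alpha_{M}$ (by averaging any metric) and then lifts the global section $g(\alpha(Q))\in\Gamma(\alpha_{M}^{*})$ to an odd function $\sigma'$ via the identification $\alpha_{M}^{*}\cong J/J^{2}$. Because $g(\alpha(Q))$ is already $T$-invariant, averaging $\sigma'$ over $T$ leaves its class in $J/J^{2}$ unchanged, so one reads off $Q\sigma|_{M_{\red}}=\lVert\alpha(Q)\rVert_{g}^{2}$ directly, without your separate argument that averaging a nonnegative function with $T$-invariant zero set preserves that zero set. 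Your route works too, once you make precise the step you flag: take $T$ to be the closure of the flow of $Q^{2}$, so that $[Q,Q^{2}]=0$ gives $t^{*}Q=Q$ for all $t\in T$; then $Q\sigma|_{M_{\red}}$ really is the $T$-average of $Q\sigma_{0}|_{M_{\red}}$ and your positivity argument goes through. In short, the paper front-loads the averaging onto the metric rather than onto the function, which buys a coordinate-free construction and avoids the partition-of-unity bookkeeping, but the two arguments are equivalent in content.
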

Before proving this lemma, we note that this function will be central
to most of this section so the importance of this lemma should not
be overlooked. 
\begin{proof}
Since $Q^{2}$ comes from an action of some compact torus $T$, and
induces an action on $\alpha_{M}$, there exists a $Q^{2}$ invariant
inner product $g$ on $\alpha_{M}$. Let us denote the induced isomorphism
between $\alpha_{M}$ and $\alpha_{M}^{*}$ also by $g$. There exists
an odd function $\sigma'$ on $M$ such that $\alpha^{*}(\sigma')=g(\alpha(Q))$.
There are numerous ways of seeing it, and perhaps the most direct
way is to note that the sheaf of sections of $\alpha_{M}^{*}$ is
naturally isomorphic to $J/J^{2}$ where $J$ is the sheaf of ideals
generated by odd functions. We can now average out by the action of
$T$ to get a $T$ invariant odd function $\sigma$. In particular
$Q^{2}\sigma=0$.

Since $Q$ and $g$ are $T$ invariant, we conclude that so is $g(\alpha(Q))$
and thus in particular $\alpha^{*}(\sigma)=\alpha^{*}(\sigma')$.
We now take a look at how $\sigma$ looks in local coordinates. If
$\{x_{1},\dots,x_{n},\theta_{1},\dots,\theta_{m}\}$ are coordinates
in some neighborhood such that 
\[
Q=(b^{\alpha}(x)+\dots)\p{\theta^{\alpha}}+(\dots)\p{x^{i}},
\]
then 
\[
\sigma=g_{\alpha\beta}b^{\alpha}\theta^{\beta}+\dots
\]
and 
\[
Q\sigma=g_{\alpha\beta}b^{\alpha}b^{\beta}+\dots.
\]
In particular, $Q\sigma_{|M_{red}}=\norm{\alpha(Q)}_{g}$ which shows
that the vanishing set of $Q\sigma_{|M_{red}}$ coincides with $R_{Q}$.
\end{proof}

\begin{proof}[Proof of \propref{general_localization_theorem}]
Define a function 
\[
\beta=\frac{\sigma}{Q\sigma}
\]
where $\sigma$ is the function from the lemma. The function $\beta$
is defined on the complement of $R_{Q}$ and satisfies $Q\beta=1$.

We first construct a convenient partition of unity of $M$. Considering
the action of $Q^{2}$ on $M_{red}$ we can find a $Q^{2}$ invariant
locally finite open cover $\{U_{i}\}$ and an open neighborhood $O\supset R_{Q}$
such that $O\subset U_{0}\subset U$ and $U_{i}\bigcap O=\emptyset$
for $i\neq0$. Taking any partition of unity $\{f_{i}\}$ subordinate
to $\{U_{i}\}$ and averaging out by the action of the torus $T$
inducing $Q^{2}$, we can assume that $\{f_{i}\}$ are $Q^{2}$ invariant.
Consider now the functions $g_{i}$ given by 
\[
g_{i}=\begin{cases}
Q(\beta f_{i}) & \mbox{ for }i>0\\
1-\sum_{i\neq0}g_{i} & \mbox{ for }i=0.
\end{cases}
\]
It is now not difficult to see that $g_{0}$ satisfies the requirements
of the proposition. Moreover, note that for $i\neq0$, we have 
\[
\int_{M}\mu g_{i}=\int_{M}\mathcal{L}_{Q}(\mu\beta f_{i})=0
\]
where the last equality follows from invariance of integration under
diffeomorphisms. We now have 
\[
\int_{M}\mu=\sum_{i}\int_{M}\mu g_{i}=\int_{M}\mu g_{0}.
\]
It is left to the reader to show that the above localization statement
holds for any $g$ as in the statement of the proposition. 
\end{proof}

\subsection{Localization Formula}

The previous section tells us when an integral of a density $\mu$
localizes to the vanishing set of an odd vector field $Q$. In this
section we will calculate this integral in terms of the local data
of $\mu$ and $Q$ near this vanishing space in the case the vanishing
space of $Q$ is a non-degenerate subsupermanifold.
\begin{defn}
\label{def:orientation_of_an_automorphism}Let $V$ be a vector space
over $\mathbb{R}$ and $E$ an automorphism of $V$ such that the
closure of the group generated by $e^{tE}$ in $GL(V)$ is compact.
Let $g$ be any inner product such that $e^{tE}\subset O(g)$. Define
\[
\ori(E)\in\ori(V)
\]
to be the orientation determined by the symplectic form $(x,y)\mapsto\langle x,Ey\rangle_{g}$.
\end{defn}
Note that the above construction is well defined since the space of
inner products $g$ satisfying $e^{tE}\subset O(g)$ is a convex set.
This can be seen from the fact that the condition $e^{tE}\subset O(g)$
is equivalent to $E$ being skew-adjoint with respect to $g$ which
is a linear condition on $g$. 

We recall that a symplectic form $w\in\bigwedge^{2}E^{*}$ on a $2n$
dimensional vector space defines an orientation via the volume form
$w^{n}\in\bigwedge^{2n}E^{*}$. Equivalently, a basis $\beta$ belongs
to the orientation defined by $\omega$ if $\sgn(\pff(\omega_{\beta}))=1$
where $\omega_{\beta}$ is the matrix of $\omega$ in the basis $\beta$.
Combining this with the definition of $\ori(E)$, we see that a basis
$\beta$ belongs to $\ori(E)$ if and only if for any positive definite
symmetric matrix $g$ such that $gE_{\beta}$ is skew-symmetric, 
\[
\sgn(\pff(gE_{\beta}))=1
\]
where $E_{\beta}$ is the matrix of $E$ in the basis $\beta$.
\begin{thm}
\label{thm:localization_formula} Let $M$ be a supermanifold, $Q$
and odd vector fields such that $Q^{2}$ is compact and $\mu$ a $Q$-invariant
compactly supported density. Suppose that the vanishing space of $Q$
is a non-degenerate subsupermanifold $N$. Let $\nu$ be the normal
bundle of $N$ and $L:=\cL_{Q}|_{\nu}$, which is an odd automorphism.
Then ${\rm codim}(N)=2l|2l$ for some $l\in\N$, $L^{2}$ restricted
to $\nu_{1}|_{N_{\red}}$ defines an element $o\in\ori_{(0,1)}(\nu)$
as in definition \ref{def:orientation_of_an_automorphism} and the
integral of $\mu$ over $M$ is given by 
\[
\int_{M}\mu=\left(-2\pi\right)^{l}\int_{N}\inner{\sqrt{\ber(L)}\otimes o,\mu}
\]
where $\sqrt{\ber(L)}\in(\ber(\nu)\otimes\ori_{(1,1)}(\nu)$ is constructed
in \subref{Berezinian-of-an-odd-isomorphism},\subref{Square-Root-of-1-d-modules}.
\end{thm}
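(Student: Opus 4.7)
The plan is to apply \thmref{super_stationary_phase_approx} to the parametrized integral
\[
Z(\lambda):=\int_{M}\mu\,e^{i\lambda Q\sigma},
\]
where $\sigma$ is the odd function provided by \lemref{sigma_function}. Differentiating in $\lambda$ and using $\cL_{Q}\mu=0$ together with $Q^{2}\sigma=0$ gives $\mu(Q\sigma)e^{i\lambda Q\sigma}=\cL_{Q}(\mu\sigma e^{i\lambda Q\sigma})$, so $Z'(\lambda)=0$ and $Z(\lambda)\equiv Z(0)=\int_{M}\mu$. The strategy is then to extract the right hand side of the theorem as the leading coefficient in the $\lambda\to\infty$ asymptotic expansion of $Z(\lambda)$ produced by stationary phase.

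Before stationary phase can be applied I need to verify that $N$ is a non-degenerate critical subsupermanifold of $Q\sigma$ with critical value $0$ and of codimension $2l|2l$. Since $Q$ vanishes on $N$, every local coefficient of $Q$ lies in the ideal $I$ of $N$, whence $Q\sigma\in I$ and $(Q\sigma)|_{N}=0$; a more careful computation in coordinates adapted to $N$, exploiting the freedom in $\sigma$ modulo $J^{2}$ and the $T$-invariance from \lemref{sigma_function}, upgrades this to $Q\sigma\in I^{2}$, so the Hessian $H$ on $\nu$ is defined. The oddness of $L$ together with the fact that entries of the diagonal blocks of its matrix are odd elements of $\cO_{N}$ (and hence vanish along $N_{\red}$) forces $L|_{N_{\red}}$ into block antidiagonal form $\bigl(\begin{smallmatrix}0 & L_{\mathrm{oe}}\\ L_{\mathrm{eo}} & 0\end{smallmatrix}\bigr)$ with $L_{\mathrm{eo}},L_{\mathrm{oe}}$ invertible; compactness of $Q^{2}$ makes $L^{2}|_{\nu_{0}|_{N_{\red}}}=L_{\mathrm{oe}}L_{\mathrm{eo}}$ the infinitesimal generator of a compact one-parameter group, so its eigenvalues are purely imaginary and nonzero, forcing $\dim\nu_{0}=\dim\nu_{1}=2l$. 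A leading-order local computation of $Q\sigma$ then shows that, in the same basis, $H|_{N_{\red}}$ has matrix
\[
\begin{pmatrix}2\,L_{\mathrm{eo}}^{T}gL_{\mathrm{eo}} & 0\\ 0 & H_{\theta\theta}\end{pmatrix},
\]
where $g$ is the $T$-invariant metric on $\alpha_{M}$ from \lemref{sigma_function} and $H_{\theta\theta}$ is a skew-symmetric form built from $L_{\mathrm{eo}}$, $L_{\mathrm{oe}}$ and $g$; non-degeneracy of $H$ then follows from invertibility of $L$ together with compactness of $Q^{2}$.

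To isolate the contribution of $N$ I apply \propref{general_localization_theorem} to the compactly supported $Q$-invariant density $\mu e^{i\lambda Q\sigma}$ (which is $Q$-invariant since $Q(e^{i\lambda Q\sigma})=i\lambda(Q^{2}\sigma)e^{i\lambda Q\sigma}=0$), producing an even function $g_{U}$ supported in any preassigned neighborhood $U\supset N_{\red}$ and equal to $1$ near $N_{\red}$, with $Z(\lambda)=\int_{M}\mu\,g_{U}\,e^{i\lambda Q\sigma}$. Shrinking $U$ so that $N$ is the only critical subsupermanifold of $Q\sigma$ meeting $\mathrm{supp}(g_{U})$, \thmref{super_stationary_phase_approx} applies with $(Q\sigma)(N_{\red})=0$ and codimension $2l|2l$, giving
\[
Z(\lambda)=e^{\frac{\pi}{4}i\,\sgn(H_{\red})}(2\pi)^{l}(-i)^{l}\int_{N}\inner{\sqrt{\ber^{-1}H}\otimes\ori_{(0,1)}(H),\,\mu}\cdot\bigl(1+O(\lambda^{-1})\bigr).
\]
Letting $\lambda\to\infty$ kills the error term and equates $\int_{M}\mu$ with the displayed leading coefficient.

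It remains to verify the pointwise identity
\[
e^{\frac{\pi}{4}i\,\sgn(H_{\red})}(-i)^{l}\sqrt{\ber^{-1}H}\otimes\ori_{(0,1)}(H)=(-1)^{l}\,\sqrt{\ber(L)}\otimes o
\]
in $\ber(\nu)\otimes\ori_{(1,0)}(\nu)$, which together with the $(2\pi)^{l}$ prefactor from stationary phase produces the factor $(-2\pi)^{l}$ of the statement. The overall phase is in fact real: $H_{\red}$ is the ordinary Hessian of $\|\alpha(Q)\|_{g}^{2}$ at $N_{\red}$ and so is positive definite, giving $\sgn(H_{\red})=2l$ and $e^{\pi i\sgn(H_{\red})/4}(-i)^{l}=i^{l}(-i)^{l}=1$. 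Using \subref{Berezinian-of-an-odd-isomorphism} one computes $\ber(L)=\det(L_{\mathrm{oe}})\det(L_{\mathrm{eo}})^{-1}\,b_{\beta}\otimes b_{\beta}$, while from the block form of $H|_{N_{\red}}$ above one reads $\ber^{-1}(H)=\pff(H_{\theta\theta})^{2}\det(2L_{\mathrm{eo}}^{T}gL_{\mathrm{eo}})^{-1}\,b_{\beta}\otimes b_{\beta}$; the remaining sign $(-1)^{l}$ comes from comparing $o$, defined via the symplectic form $\langle\cdot,L^{2}\cdot\rangle_{g}$ in Definition~\ref{def:orientation_of_an_automorphism}, with $\ori_{(0,1)}(H)$, defined via the Pfaffian of $H_{\theta\theta}$ in \subref{Bilinear-Forms}. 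The main obstacle is precisely this last linear-algebra bookkeeping: keeping the Koszul signs, the square root of \subref{Square-Root-of-1-d-modules}, and the Pfaffian-versus-determinant conventions carefully aligned until they combine into exactly the advertised factor $(-2\pi)^{l}$ and integrand $\sqrt{\ber(L)}\otimes o$.
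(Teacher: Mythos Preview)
Your overall strategy coincides with the paper's: introduce $Z(\lambda)=\int_{M}\mu e^{i\lambda Q\sigma}$ with $\sigma$ from \lemref{sigma_function}, show $Z'(\lambda)=0$, and read off $\int_{M}\mu$ from the leading term of the stationary phase expansion of $Z$ at the critical locus $N$ of $Q\sigma$. The codimension and signature arguments, and the final bookkeeping $e^{\pi i\sgn(H_{\red})/4}(-i)^{l}=1$, also match.

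Where you diverge from the paper is in the heart of the proof: relating the Hessian $H$ of $Q\sigma$ to the operator $L=\cL_{Q}|_{\nu}$. You attempt to compute $H$ explicitly in coordinates adapted to $N$ using the construction of $\sigma$, obtaining a block form on $N_{\red}$ with an unspecified odd-odd block $H_{\theta\theta}$, and you acknowledge that the resulting Pfaffian/determinant bookkeeping is the ``main obstacle''. This is a genuine gap, for two reasons. First, your computation lives only on $N_{\red}$, whereas the identity $\sqrt{\ber^{-1}H}=\sqrt{\ber(L)}$ must hold as sections over $N$ for the integrand of the stationary phase formula to equal that of the theorem; a separate argument is needed to promote equality mod $J$ to equality on $N$. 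Second, without pinning down $H_{\theta\theta}$ you cannot actually carry out the comparison you describe.

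The paper avoids both issues by a coordinate-free observation you are missing. Differentiating the identity $Q^{2}\sigma=0$ twice and restricting to $N$ yields
\[
H(L\xi_{1},\xi_{2})+(-1)^{p(\xi_{1})}H(\xi_{1},L\xi_{2})=0,
\]
equivalently $\hat{H}\circ L=-L^{*}\circ\hat{H}$ as maps $\nu\to\nu^{*}$. From this single relation one gets immediately $\ber(\hat{H}LI')^{2}=1$; a short argument (checking the value is $+1$ mod $J$, then using $2\alpha+\alpha^{2}=0$ for $\alpha=\ber(\hat{H}LI')-1$) upgrades this to $\ber(\hat{H}LI')=1$ on all of $N$, which is exactly $\ber(L)=\ber^{-1}(H)$. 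The same relation in block form ($AV=W^{t}D$ mod $J$) also gives non-degeneracy of $H$ and the orientation comparison $\ori_{(0,1)}(H)=(-1)^{l}o$ via $\pff(gWV)=(-1)^{l}\pff(D^{t})$. This replaces your open-ended coordinate computation with a two-line identity plus standard Berezinian manipulations. Similarly, your claim that $Q\sigma\in I^{2}$ ``by a more careful computation in coordinates'' is handled in the paper more directly: for $V$ tangent to $N$ one has $VQ\sigma|_{N}=0$ trivially, and for any $V$ one has $\cL_{Q}(V)Q\sigma|_{N}=0$ from $Q^{2}\sigma=0$; since $L$ is an automorphism of $\nu$, these two cases exhaust $TM|_{N}$.
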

This extends the result of \cite{SuppersymmetryAndLocalization} to
the case where $N$ is not an isolated point. 
\begin{proof}
The main tool for proving this theorem will be the function $\sigma$
constructed in \lemref{sigma_function}. Recall that $\sigma$ is
an odd function such that $Q^{2}\sigma=0$. Define 
\[
Z(\lambda):=\int_{M}\mu e^{i\lambda Q\sigma}.
\]
We then have 
\[
\frac{d}{d\lambda}Z(\lambda)=i\lambda\int_{M}\mu Q\sigma e^{i\lambda Q\sigma}=i\lambda\int_{M}\mathcal{L}_{Q}(\mu\sigma e^{i\lambda Q\sigma})=0.
\]
In particular $Z(\lambda)$ does not depend on $\lambda$. We can
thus calculate $\int_{M}\mu=Z(0)$ via stationary phase approximation
as $\l\ra\infty$ from previous section once we show that the critical
space of $Q\sigma$ is $N$ and calculate its Hessian.

We first show that the critical subsupermanifold of $Q\sigma$ contains
$N$. For $V$ a vector field tangent to $N$, we have 
\[
VQ\sigma|_{N}=0
\]
since $Q\sigma$ is constant on $N$. For any vector field $V$ on
$M$, we have 
\[
\mathcal{L}_{Q}(V)Q\sigma=\mathcal{L}_{Q}(VQ\sigma)-VQ^{2}\sigma=\mathcal{L}_{Q}(VQ\sigma)
\]
and since $Q|_{N}=0$ we have 
\[
\mathcal{L}_{Q}(V)Q\sigma|_{N}=0.
\]
Since $N$ is non-degenerate, the map $\mathcal{L}_{Q}$ restricted
to $\nu_{N}$ is an automorphism and thus in particular any section
of $TM_{|N}$ can be written as a sum of a restriction of a vector
field tangent to $N$ and an image of $\mathcal{L}_{Q}$. This shows
that $d(Q\sigma)$ vanishes on $N$.

The even codimension of $N$ equals the odd codimension of $N$ because
$\mathcal{L}_{Q}|_{\nu}$ is an odd automorphism. The odd codimension
of $N$ is even because $\mathcal{L}_{Q^{2}}|_{\nu_{1}}$ exponentiates
to an action of a torus.

We now need to calculate $H:=\hess(Q\sigma)$. For the time being,
when referring to $H$ we will mean the restriction of $H$ to $N$
(So far, we only showed that the critical subsupermanifold of $Q\sigma$
contains $N$, not that it equals $N$). Consider any two vector fields
$\xi_{1},\xi_{2}$ on $M$. We have 
\[
\xi_{1}\xi_{2}Q^{2}\sigma=0
\]
and thus, applying the Leibniz rule to $\mathcal{L}_{Q}$, we get
that 
\[
0=\left(-1\right)^{p(\xi_{1})+p(\xi_{2})}\xi_{1}\xi_{2}Q^{2}\sigma=\mathcal{L}_{Q}\left(\xi_{1}\xi_{2}Q\sigma\right)-\mathcal{L}_{Q}(\xi_{1})\xi_{2}Q\sigma-(-1)^{p(\xi_{1})}\xi_{1}\mathcal{L}_{Q}\left(\xi_{2}\right)Q\sigma
\]
and in particular, restricting to $N$ we get 
\[
\mathcal{L}_{Q}(\xi_{1})\xi_{2}Q\sigma+(-1)^{p(\xi_{1})}\xi_{1}\mathcal{L}_{Q}(\xi_{2})Q\sigma=0.
\]
In terms of the $H$ and $L$, this gives us 
\[
H(L\xi_{1},\xi_{2})+(-1)^{p(\xi_{1})}H(\xi_{1},L\xi_{2})=0.
\]
In terms of maps, the above equality states that 
\begin{equation}
\hat{H}\circ L=-L^{*}\circ\hat{H}\label{eq:HL=00003DLH}
\end{equation}
where 
\[
\hat{H}:\nu\rightarrow\nu^{*}
\]
is the map induced by $H$. The maps in \eqref{HL=00003DLH} are odd
maps from $\nu$ to $\nu^{*}$. In terms of matrices with respect
to some basis $\beta$ of $\nu$ the above equality is expressed by
\begin{equation}
\hat{H}L=-L{}^{st}\hat{H}.\label{eq:HL=00003DLHMatrices}
\end{equation}
If in block form, 
\begin{eqnarray*}
\hat{H} & = & \left(\begin{matrix}A & B\\
C & D
\end{matrix}\right)\\
L & = & \left(\begin{matrix}U & V\\
W & X
\end{matrix}\right),
\end{eqnarray*}
then the above equality implies that modulo odd variables, 
\begin{equation}
AV=W^{t}D.\label{eq:AV=00003DWD}
\end{equation}
By construction of $\sigma$, one can see that $A$ is non-degenerate
while $V$ and $W$ are non-degenerate by the fact that $L$ is an
automorphism. Thus in particular $D$ is non-degenerate and $N$ is
the critical subsupermanifold of $Q\sigma$. 

Defining the matrix 
\[
I'=\left(\begin{matrix}0 & \id\\
\id & 0
\end{matrix}\right)
\]
as in the coordinate description of the Berezinian of an odd automorphism,
it is not hard to see that 
\begin{eqnarray*}
\ber(L{}^{st}I') & = & \ber(LI')^{-1}\\
\ber(I'\hat{H}I') & = & \ber(\hat{H})^{-1}.
\end{eqnarray*}
In particular, it follows from \eqref{HL=00003DLHMatrices} that 
\[
\ber(\hat{H}LI')^{2}=1.
\]
We now show that $\ber(\hat{H}LI')=1$ modulo odd variables. Let $J$
be the ideal of $\mathcal{O}_{N}$ generated by odd functions. We
have 
\[
\hat{H}LI'\cong\left(\begin{matrix}AV & 0\\
0 & DW
\end{matrix}\right)\ \ \mbox{mod}(J).
\]
It thus follows from \eqref{AV=00003DWD} and the fact that the dimension
of $\nu_{1}$ is even that 
\[
\ber(\hat{H}LI')=1\ \ \mbox{mod}(J).
\]
We now show that in fact $\ber(\hat{H}LI')=1$. Let $\alpha=\ber(\hat{H}LI')-1$.
We have that $\alpha\in J^{2}$ and satisfies 
\[
2\alpha+\alpha^{2}=0
\]
since $(1+\alpha)^{2}=1$. It follows from the equation above that
if $\alpha\in J^{k}$ then also $\alpha\in J^{2k}$. Since for sufficiently
large $k$ we have that $J^{k}=0$, we conclude that $\alpha=0$. 

Recalling the definition of $\ber(L)$ and $\ber(H)$, the equality
\[
\ber(\hat{H}LI')=1
\]
implies that 
\[
\ber(L)=\ber^{-1}(H).
\]

We now show that the $(0,1)$ orientation defined by $H$ differs
from the one defined by $L^{2}$ by $(-1)^{l}$. For the remaining
of this paragraph, all of the quantities considered will be taken
restricted to $N_{red}$ or equivalently modulo the ideal $J$ generated
by odd functions. Denote by $H_{\beta}$ the matrix $H(e_{i},e_{j})$.
We have $H_{\beta}=\hat{H}{}^{st}$ and therefore the $(0,1)$ orientation
defined by $H$ is given by the sign of $\pff(D^{t})$. The orientation
defined by $L^{2}$ is given by the automorphism of $\nu_{1}$ which
in the fixed basis is given by the matrix $WV$. The desired relationship
between orientations will follow from \eqref{AV=00003DWD}
\[
AV=W^{t}D.
\]
Define 
\[
g:=W^{-t}AW^{-1}.
\]
From the definition of $\sigma$, it follows that $A$ is positive
definite, and therefore so is $g$. We then have 
\[
D=gWV
\]
and therefore orientation defined by $WV$ is determined by 
\[
\pff(gWV)=\pff(D)=(-1)^{l}\pff(D^{t}).
\]
This shows that $\ori_{(0,1)}(H)=(-1)^{l}o\in\ori_{(0,1)}(\nu)$ where
$o$ is the orientation of $\nu_{1}$ defined by $L^{2}|_{N_{red}}$.

The theorem now follows by applying stationary phase approximation
(\thmref{super_stationary_phase_approx}) to $Z(\lambda)$, noting
that $Q\sigma$ vanishes on $N$ and that $\sgn(H_{\red})=2l$.

\end{proof}

\newcommand{\etalchar}[1]{$^{#1}$}

\end{document}